\newtheorem{theorem}{Theorem}[section]
\newtheorem{lemma}[theorem]{Lemma}
\theoremstyle{definition}
\newtheorem{definition}[theorem]{Definition}
\theoremstyle{remark}
\newtheorem{remark}[theorem]{Remark}
\newtheorem{conjecture}[theorem]{Conjecture}
\DeclareMathOperator{\Par}{Par}
\title[Generators for the Algebra of Symmetric Functions]{Generators for the Algebra\\of Symmetric Functions}
\email{\small{velmurugan@imsc.res.in}}
\author[Velmurugan S]{Velmurugan S}
\address{The Institute of Mathematical Sciences, Chennai}
\address{Homi Bhabha National Institute, Mumbai}
\subjclass{05E05}
\keywords{Symmetric functions, Hall inner product}
\begin{document}

\begin{abstract}
The algebra of symmetric functions contains several interesting families of symmetric functions indexed by integer partitions or skew partitions.
Given a sequence $\{u_n\}$ of symmetric functions taken from one of these families such that $u_n$ is homogeneous of degree $n$, we provide necessary and sufficient conditions for the sequence to form a system of algebraically independent generators for the algebra of symmetric functions.
\end{abstract}
\maketitle

\section{Introduction}
Let $R$ be a commutative ring with $1$.
Let $\Lambda_{R}$ denote the ring of symmetric functions in infinitely many variables $x_1,x_2,\dotsc$, and coefficients in $R$.
We define a multi-index to be an infinite sequence $\alpha=(\alpha_1,\alpha_2,\dotsc)$ of nonnegative integers with finitely many positive terms.
The size of a multi-index $\alpha$ is the sum of its terms and is denoted by $|\alpha|$.
A multi-index $\alpha$ is said to be a partition if its terms are in weakly decreasing order.
Let $\mathrm{Par}$ denote the set of partitions.
If $\lambda$ is a partition of size $n$, we call it a partition of $n$ and write $\lambda\vdash n$.
The shape of a multi-index is the partition obtained by rearranging its terms in weakly decreasing order.
For each multi-index $\alpha$, define $x^\alpha=\prod_{i\geq1}x_i^{\alpha_i}$.
For each partition $\lambda$, define the monomial symmetric function $m_\lambda=\sum_{\alpha}x^\alpha$, where the sum is over all multi-indices $\alpha$ with shape $\lambda$. 
It is easy to show that $\{m_\lambda \}_{\lambda\in \mathrm{Par}}$ forms an $R$-basis for $\Lambda_R$.
For each integer $n\geq 0$, define the $n$th complete, elementary and power sum symmetric functions as
\begin{displaymath}
    h_n =\sum_{\lambda\vdash n}m_\lambda,\quad e_n = m_{(1^n)}, \text{ and } p_n = m_{(n)},
\end{displaymath}
respectively.

It is well known that $\{h_n\}_{n\geq0}$ and $\{e_n\}_{n\geq0}$ are algebraically independent generators for $\Lambda_{R}$ and $\{p_n\}_{n\geq0}$ forms an algebraically independent generator for $\Lambda_{R}$ if $R$ is a $\mathbb Q$-algebra. Let $\mathbb F$ be a field of characteristic 0.
Then notably, the sequences  $\{m_{(1^n)}\}_{n\geq0}$ and $\{m_{(n)}\}_{n\geq0}$ of monomial symmetric functions are algebraically independent generators for $\Lambda_{\mathbb F}$.
\begin{definition}
A \emph{graded partition sequence} is a sequence $\{\lambda^n\}_{n\geq 0}$ where $\lambda^n$ is a partition of $n$ for every $n\geq 0$.
\end{definition}
D.~G.~Mead~\cite{MR1542321} studied the following natural question:
\begin{quotation}
For which graded partition sequences $\{\lambda^n\}$, does the set $\{m_{\lambda^n}\}_{n\geq0}$ generate $\Lambda_\mathbb{F}$?
\end{quotation}
He obtained the following result.
\begin{theorem}[Mead]
For any graded partition sequence $\{\lambda^n\}$, \linebreak $\{ m_{\lambda^n}\}_{n\geq0}$ generates $\Lambda_\mathbb{F}$.
\end{theorem}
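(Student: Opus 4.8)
The plan is to show that the $\mathbb F$-subalgebra $A\subseteq\Lambda_{\mathbb F}$ generated by $\{m_{\lambda^n}\}_{n\geq0}$ is all of $\Lambda_{\mathbb F}$, by induction on the degree. Write $\Lambda^{(n)}$ for the $\mathbb F$-span of $\{m_\mu:\mu\vdash n\}$, the homogeneous component of degree $n$; since $A$ is generated by homogeneous elements it is graded, so it suffices to prove $\Lambda^{(n)}\subseteq A$ for every $n$. The case $n=0$ is trivial. For the inductive step, assume $\Lambda^{(k)}\subseteq A$ for all $k<n$. Then the degree-$n$ part of $A$ contains $m_{\lambda^n}$ together with all products $\Lambda^{(a)}\Lambda^{(n-a)}$ for $1\leq a\leq n-1$; writing $D_n=\sum_{a=1}^{n-1}\Lambda^{(a)}\Lambda^{(n-a)}$ for the subspace of decomposables, it is enough to show $\Lambda^{(n)}=D_n+\mathbb F\,m_{\lambda^n}$. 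Because $\Lambda_{\mathbb F}=\mathbb F[e_1,e_2,\dots]$ is a polynomial algebra with exactly one generator in each positive degree, one has $\Lambda^{(n)}=D_n\oplus\mathbb F\,e_n$, so $\dim_{\mathbb F}(\Lambda^{(n)}/D_n)=1$; hence the claim reduces to showing $m_{\lambda^n}\notin D_n$.

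To detect this I will use the power-sum basis, which is available since $\mathbb F$ has characteristic $0$. For $f\in\Lambda^{(n)}$ let $\phi_n(f)\in\mathbb F$ be the coefficient of $p_n$ in the expansion of $f$ in the basis $\{p_\mu\}_{\mu\vdash n}$; equivalently $\phi_n(f)=\tfrac1n\langle f,p_n\rangle$ for the Hall inner product. Since $p_\mu p_\nu=p_{\mu\cup\nu}$ and $\mu\cup\nu$ has at least two parts whenever $\mu$ and $\nu$ are nonempty, the linear functional $\phi_n$ annihilates every product of two symmetric functions of positive degree; therefore $\phi_n(D_n)=0$. Thus $m_{\lambda^n}\notin D_n$ will follow once we check that $\phi_n(m_\lambda)\neq0$ for every partition $\lambda\vdash n$ — this is the one place where the particular shape of the monomial symmetric functions is used, and is the heart of the matter.

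For this last point, the duality $\langle m_\lambda,h_\mu\rangle=\delta_{\lambda\mu}$ gives $n\,\phi_n(m_\lambda)=\langle m_\lambda,p_n\rangle=[h_\lambda]\,p_n$, the coefficient of $h_\lambda=\prod_i h_{\lambda_i}$ in the expansion of $p_n$ in the $h$-basis $\{h_\mu\}_{\mu\vdash n}$. Extracting the coefficient of $t^n$ from $\sum_{k\geq1}p_kt^k/k=\log\!\bigl(1+\sum_{k\geq1}h_kt^k\bigr)$ and collecting terms by partition yields
\[
p_n=\sum_{\mu\vdash n}\frac{(-1)^{\ell(\mu)-1}\,n}{\ell(\mu)}\cdot\frac{\ell(\mu)!}{\prod_{r\geq1}m_r(\mu)!}\;h_\mu,
\]
where $m_r(\mu)$ is the number of parts of $\mu$ equal to $r$ and $\ell(\mu)=\sum_{r\geq1}m_r(\mu)$ is the number of parts. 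Hence $\phi_n(m_\lambda)=(-1)^{\ell(\lambda)-1}(\ell(\lambda)-1)!\big/\prod_{r\geq1}m_r(\lambda)!$, a nonzero rational number, which finishes the induction and the proof. I expect the only genuine obstacle to be this coefficient computation; the remainder is formal bookkeeping with the grading and the Hall inner product.
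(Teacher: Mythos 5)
Your proof is correct, and it reaches the same pivotal fact as the paper --- that $\langle m_\lambda,p_n\rangle\neq 0$ for every $\lambda\vdash n$ --- but by a different route on both ends. For the reduction, the paper's Lemma~\ref{lemma:gen2} tracks the coefficient of $h_n$ in the $h$-expansion of $u_n$ and inducts to show $h_n$ lies in the generated subalgebra; your quotient-by-decomposables argument ($\dim\Lambda^{(n)}/D_n=1$, with $\phi_n=\tfrac1n\langle\,\cdot\,,p_n\rangle$ as the detecting functional that kills $D_n$) is an equivalent but cleanly packaged version of the same mechanism, since $h_\mu\in D_n$ exactly when $\mu\neq(n)$. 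For the nonvanishing, the paper invokes the domino-tabloid expansion $\langle m_\lambda,p_n\rangle=\epsilon_{(n)}\epsilon_\lambda w_{(n)\lambda}$ and observes $w_{(n)\lambda}>0$, whereas you extract the explicit closed form $\langle m_\lambda,p_n\rangle=(-1)^{\ell(\lambda)-1}\,n\,(\ell(\lambda)-1)!\big/\prod_r m_r(\lambda)!$ from $\log H(t)$; the two agree (your formula is $\epsilon_{(n)}\epsilon_\lambda$ times the total weight of one-row domino tabloids of type $\lambda$), and your computation is verifiably right. What your version buys is an exact value, making the sign and magnitude transparent with no combinatorial input; what the paper's version buys is the positivity structure of the $w_{\xi\lambda}$, which is what carries over to the skew generalization (Theorem~\ref{theorem:mlamu}) and to the $\mathbb Z$-coefficient refinements, where one must decide when a whole sum of such terms is nonzero or equal to $1$.
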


Working with finitely many variables, $x_1,\dotsc,x_m$,
V.~Timofte~\cite{MR2205035} found general criteria for a sequence of symmetric polynomials to freely generate the ring $\Lambda_{m,\mathbb F}$ of symmetric polynomials in $m$ variables.
\begin{theorem}[Timofte]
Let $f_1, f_2, \dots f_m$  be symmetric polynomials in $\Lambda_{m,\mathbb{F}}$ with $\deg f_i\leq i$ for $1\leq i \leq m$.

Then the following are equivalent.
\begin{enumerate}
\item $\mathbb{F}[f_1,\dots,f_m]=\Lambda_{m,\mathbb{F}}$.
\item The matrix $(\partial f_j/\partial x_j)_{1\leq i,j\leq m}$ is non-singular.
\item $\dfrac{\partial f_1}{\partial e_1},\dots,\dfrac{\partial f_m}{\partial e_m}$ are all nonzero in $\Lambda_{m,\mathbb {F}}$.
\end{enumerate}
If the above conditions hold then $f_1,\dots,f_m$ are algebraically independent over $\mathbb{F}$ and deg $f_i=i$ for all $1\leq i\leq m$.
\end{theorem}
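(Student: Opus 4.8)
The plan is to pass to the presentation $\Lambda_{m,\mathbb F}=\mathbb F[e_1,\dots,e_m]$ as a polynomial ring on the elementary symmetric polynomials, graded so that $\deg e_i=i$. Write each $f_i$ as the unique polynomial $F_i(e_1,\dots,e_m)$, and recall that the weighted degree of a monomial $e_1^{a_1}\cdots e_m^{a_m}$ equals its degree in $x_1,\dots,x_m$. Then $\deg f_i\le i$ forces $F_i$ to lie in $\mathbb F[e_1,\dots,e_i]$ and to contain $e_i$ only through a single linear term, giving the triangular normal form $f_i=c_ie_i+g_i(e_1,\dots,e_{i-1})$ with $c_i\in\mathbb F$ and $g_i$ of weighted degree at most $i$. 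Since $\partial f_i/\partial e_i=c_i$, condition (3) says exactly that $c_i\neq0$ for every $i$, and the rest of the argument simply exploits this triangular shape.

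For $(3)\Rightarrow(1)$ together with the closing assertions, I would induct on $i$ to show $e_i\in\mathbb F[f_1,\dots,f_i]$, the inductive step being $e_i=c_i^{-1}\bigl(f_i-g_i(e_1,\dots,e_{i-1})\bigr)$ once $e_1,\dots,e_{i-1}$ have been produced. This yields $\mathbb F[f_1,\dots,f_m]\supseteq\mathbb F[e_1,\dots,e_m]=\Lambda_{m,\mathbb F}$, hence equality. Because $\Lambda_{m,\mathbb F}$ has transcendence degree $m$ over $\mathbb F$, the $m$ generators $f_1,\dots,f_m$ must then be algebraically independent; and since $c_i\neq0$ while $e_1,\dots,e_m$ are algebraically independent, the degree-$i$ homogeneous component $c_ie_i+(\text{top part of }g_i)$ of $f_i$ is nonzero, so $\deg f_i=i$ exactly.

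For $(1)\Rightarrow(3)$ I would argue contrapositively: if $c_k=0$ for some $k$, then $f_k=g_k\in\mathbb F[e_1,\dots,e_{k-1}]$, and likewise $f_j\in\mathbb F[e_1,\dots,e_j]\subseteq\mathbb F[e_1,\dots,e_{k-1}]$ for $j<k$, so $f_1,\dots,f_k$ all lie in a subring of transcendence degree $k-1$ and are therefore algebraically dependent; hence $\mathbb F[f_1,\dots,f_m]$ has transcendence degree $<m$ and cannot be all of $\Lambda_{m,\mathbb F}$. For $(2)\Leftrightarrow(3)$, the chain rule gives the factorization of Jacobian matrices $(\partial f_i/\partial x_j)_{i,j}=M\cdot(\partial e_k/\partial x_j)_{k,j}$ with $M=\bigl(\partial F_i/\partial e_k\bigr)_{i,k}$ evaluated at $(e_1,\dots,e_m)$; the matrix $(\partial e_k/\partial x_j)$ has determinant $\pm\prod_{i<j}(x_i-x_j)\neq0$ (classically, and also because $\mathbb F[x_1,\dots,x_m]$ is a finite separable extension of $\mathbb F[e_1,\dots,e_m]$ in characteristic $0$), so $\det(\partial f_i/\partial x_j)\neq0$ iff $\det M\neq0$; and the normal form makes $M$ lower triangular with diagonal $(c_1,\dots,c_m)$, whence $\det M=c_1\cdots c_m$, nonzero precisely under (3). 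Assembling $(3)\Rightarrow(1)\Rightarrow(3)$ with $(2)\Leftrightarrow(3)$ closes the loop.

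The proof is not deep, but it stands or falls on the first step: every implication uses the hypothesis $\deg f_i\le i$ only through the triangular normal form $f_i=c_ie_i+g_i(e_1,\dots,e_{i-1})$, so the care there is essential (without it every implication fails), and one also needs the nonvanishing of the Jacobian $\det(\partial e_k/\partial x_j)$, which I would either cite or derive from the Vandermonde identity. Minor care is required to read ``non-singular'' in (2) as ``having nonzero determinant in $\mathbb F[x_1,\dots,x_m]$'' and to keep $\partial/\partial x_j$ and $\partial/\partial e_k$ distinct while applying the chain rule.
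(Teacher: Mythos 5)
The paper quotes this result from Timofte without giving any proof, so there is no in-paper argument to compare against; your proposal therefore has to stand on its own, and it does. The key observation — that $\deg f_i\le i$ together with the weighted grading $\deg e_k=k$ forces the triangular normal form $f_i=c_ie_i+g_i(e_1,\dots,e_{i-1})$, because any monomial $e_1^{a_1}\cdots e_m^{a_m}$ containing $e_j$ with $j>i$, or containing $e_i$ to a power $\ge 2$ or alongside another factor, already has weighted degree exceeding $i$ — is exactly right, and it does carry every implication: the triangular back-substitution gives $(3)\Rightarrow(1)$, the transcendence-degree count gives $(1)\Rightarrow(3)$, and the chain-rule factorization $(\partial f_i/\partial x_j)=M\cdot(\partial e_k/\partial x_j)$ with $M$ lower triangular of determinant $c_1\cdots c_m$ and $\det(\partial e_k/\partial x_j)=\pm\prod_{i<j}(x_i-x_j)\neq 0$ gives $(2)\Leftrightarrow(3)$. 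Your reading of the (typo-laden) condition (2) as nonvanishing of $\det(\partial f_i/\partial x_j)$ in $\mathbb F[x_1,\dots,x_m]$ is the intended one. The only point worth making explicit is the one you use implicitly throughout: since $e_1,\dots,e_m$ are algebraically independent and each $e_k$ is homogeneous of degree $k$, the $x$-degree of $F(e_1,\dots,e_m)$ equals the maximal weighted degree of a monomial of $F$ (no cancellation can occur among top-degree terms), which is what justifies both the normal form and the conclusion $\deg f_i=i$. This is in the same spirit as the paper's own machinery (Lemma 2.1 is likewise a triangularity-plus-unit-leading-coefficient argument, phrased via the Hall pairing instead of the $e$-presentation), so your route is the natural finite-variable analogue.
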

%For each $n\geq0$, let $u_n$ be a homogeneous symmetric function of degree $n$.
%We will see (Lemma~\ref{lemma:gen1}) that $\{u_n\}_{n\geq0}$ is algebraically independent if and only if it generates $\Lambda_\mathbb{F}$.
%Moreover (Lemma~\ref{lemma:gen2}), $\{u_n\}_{n\geq 0}$ forms an algebraically independent system of generators of $\Lambda_{\mathbb F}$ if and only if, for every $n\geq 0$, $\langle u_n,p_n \rangle \neq 0$.

In this article, we consider families of symmetric functions indexed by partitions and also skew partitions.
By a skew partition of $n$, we mean a pair $(\lambda,\mu)$ such that $|\lambda|-|\mu|=n$.
Unlike the standard usage (e.g., Macdonald~\cite{MR3443860}), we do not assume that each part of $\lambda$ is at least as large as the corresponding part of $\mu$.
Nonetheless, we denote the skew partition $(\lambda,\mu)$ by $\lambda/\mu$.
\begin{definition}
    A \emph{graded skew partition sequence} is a sequence $\{\lambda^n/{\mu^n}\}_{n\geq 0}$ of skew partitions with $\lambda^n/\mu^n$ a skew partition of $n$ for every $n\geq 0$.
\end{definition}
Given a partition $\lambda=(\lambda_1,\lambda_2,\dotsc)$, define symmetric functions indexed by $\lambda$ as follows:
\begin{displaymath}
    p_\lambda = \prod_{i\geq 1} p_{\lambda_i},\quad h_\lambda = \prod_{i\geq 1} h_{\lambda_i}, \text{ and } e_\lambda = \prod_{i\geq 1} e_{\lambda_i}.
\end{displaymath}
The Hall inner product on $\Lambda_R$ is defined by
\begin{equation}
    \label{eq:hall-inner-prod}
    \langle h_\lambda,m_\mu\rangle = \delta_{\lambda\mu} \text{ for }\lambda,\mu\in\Par.
\end{equation}

The Schur functions can be defined by the Jacobi-Trudi formula
\begin{displaymath}
    s_\lambda = \det(h_{\lambda_i-i+j})_{1\leq i,j\leq n},
\end{displaymath}
where $n$ is the number of non-zero parts of $\lambda$.

In general, given a family $\{u_\lambda\}_{\lambda\in \Par}$ of symmetric functions indexed by integer partitions, we may define skew versions $u_{\lambda/\mu}$ of this family indexed by skew partitions as follows:
\begin{displaymath}
    \langle u_{\lambda/\mu}, f\rangle = \langle u_\lambda, u_\mu f\rangle
\end{displaymath}
for every $f\in \Lambda_{\mathbb F}$.

For each family $\{u_\lambda\}$, we give conditions on a graded partition sequence $\{\lambda^n\}$ (skew partition sequence $\{\lambda^n/\mu^n\}$) such that $\{u_{\lambda^n}\}$ (resp., $\{u_{\lambda^n/\mu^n}\}$) forms a system of algebraically independent generators of $\Lambda_{\mathbb F}$.
Our results are summarized in Table~\ref{table:main}.
\begin{table}
\begin{tabular}{ |>{\raggedright}p{5.5cm}|p{2cm}|>{\raggedright\arraybackslash}p{4cm}|  }
\hline
Symmetric function & Symbol & Condition \\
\hline
monomial   & $m_\lambda$  & no restriction \\
skew monomial   & $m_{\lambda/\mu}$  & $\lambda$ refines $n$ (Defn.~\ref{defn:refinement})\\
 forgotten   & $f_\lambda$  & no restriction \\
skew forgotten   & $f_{\lambda/\mu}$  & $\lambda$ refines $n$\\
skew complete   & $h_{\lambda/\mu}$  & $\lambda_1\geq n$ \\
skew elementary   & $e_{\lambda/\mu}$ & $\lambda_1\geq n$ \\
Schur  &$s_\lambda$ & $\lambda$ is a hook \\
skew Schur   & $s_{\lambda/\mu}$  & $\lambda/\mu$ is a border strip \\
Hall-Littlewood   &$P_\lambda (x;t)$  & no restriction \\
    
    Hall-Littlewood  & $Q_\lambda (x;t)$ & no restriction \\
        big Schur  & $S_\lambda (x;t)$  & $\lambda$ is a hook \\
    
    $Q$-Whittaker   & $W_\lambda (x;t)$  &  no restriction \\
    
    Macdonald   & $P_\lambda (x;q,t)$ &  no restriction \\
    
    integral form of Macdonald \quad   & $J_\lambda (x;q,t)$  & no restriction \\
\hline
\end{tabular}
\smallskip
\caption{Generators for the algebra of symmetric functions}
\label{table:main}
\end{table}
We also investigate specializations of many of these families and determine when they form a system of algebraically independent generators of $\Lambda_{\mathbb F}$.
In our proofs, we shall freely use standard results from the theory of symmetric functions for which the reader is referred to \cite{MR3443860}.
\section{The Lemmas}
We shall start by proving two lemmas which are required in our investigation.
We fix $u_0=1$.
\begin{lemma}
\label{lemma:gen1}
    For each $n\geq 0$, let $u_n\in \Lambda_R$ be homogeneous of degree $n$.
    If $\{u_n\}$ generates $\Lambda_R$ as an $R$-algebra, then $\{u_n\}$ is algebraically independent over $R$.
    The converse holds if $R$ is a field.
\end{lemma}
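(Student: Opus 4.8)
The plan is to exploit the grading $\Lambda_R=\bigoplus_{d\geq 0}\Lambda_R^{(d)}$, where the homogeneous component $\Lambda_R^{(d)}$ is free over $R$ with basis $\{m_\lambda\}_{\lambda\vdash d}$; in particular it has rank $p(d)$, the number of partitions of $d$. Put the grading on the polynomial algebra $R[X_1,X_2,\dotsc]$ in which $X_n$ is assigned weight $n$, so that the weight-$d$ monomials are exactly $X_\lambda:=\prod_i X_{\lambda_i}$ with $\lambda\vdash d$, of which there are $p(d)$. Since $u_n$ is homogeneous of degree $n$ (and $u_0=1$), the evaluation map $X_n\mapsto u_n$ is a morphism of graded $R$-algebras; hence it sends weight-$d$ elements into $\Lambda_R^{(d)}$, and any polynomial relation $P(u_1,u_2,\dotsc)=0$ splits into its weighted-homogeneous parts, each of which evaluates to $0$ separately because $\Lambda_R=\bigoplus_d\Lambda_R^{(d)}$ is an internal direct sum. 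Thus everything reduces to a degree-by-degree analysis of the monomials $u_\lambda=\prod_i u_{\lambda_i}$.

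For the forward direction, suppose $\{u_n\}$ generates $\Lambda_R$. By the homogeneity just noted, for each $d\geq 1$ the $p(d)$ monomials $\{u_\lambda\}_{\lambda\vdash d}$ span the rank-$p(d)$ free module $\Lambda_R^{(d)}$. I would then invoke the standard fact that a generating set of a finitely generated free module over a commutative ring whose cardinality equals the rank is automatically a basis --- equivalently, that a surjective endomorphism of a finitely generated module over a commutative ring is injective (the Cayley--Hamilton trick). Consequently $\{u_\lambda\}_{\lambda\vdash d}$ is $R$-linearly independent for every $d$. If now $P(u_1,u_2,\dotsc)=0$ for some nonzero polynomial $P$, then some weighted-homogeneous part $P_d$ of $P$ is a nonzero $R$-linear combination of the $X_\lambda$ with $\lambda\vdash d$, and $P_d(u)=0$ by the previous sentence --- a nontrivial $R$-linear dependence among $\{u_\lambda\}_{\lambda\vdash d}$, a contradiction. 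Hence $\{u_n\}$ is algebraically independent over $R$.

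For the converse, let $R=\mathbb F$ be a field and suppose $\{u_n\}$ is algebraically independent over $\mathbb F$. Then for each $d$ the monomials $\{u_\lambda\}_{\lambda\vdash d}$ are $\mathbb F$-linearly independent, since any dependence would be a nonzero polynomial relation among the $u_n$. Therefore the subspace $A^{(d)}:=\operatorname{span}_{\mathbb F}\{u_\lambda:\lambda\vdash d\}$ of $\Lambda_{\mathbb F}^{(d)}$ has dimension $p(d)=\dim_{\mathbb F}\Lambda_{\mathbb F}^{(d)}$, and a subspace of a finite-dimensional vector space of full dimension is the whole space, so $A^{(d)}=\Lambda_{\mathbb F}^{(d)}$. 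Summing over $d$ gives $\mathbb F[u_1,u_2,\dotsc]=\bigoplus_d A^{(d)}=\Lambda_{\mathbb F}$, so $\{u_n\}$ generates.

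I do not expect a serious obstacle; the one point to get right is the source of the asymmetry between the two implications. The forward direction works over any commutative ring $R$ because it needs only the surjective-endomorphism lemma, whereas the converse genuinely uses that $R$ is a field: over a general ring a submodule of a free module of the same rank need not be the whole module, and the converse can indeed fail (for example, $\{2e_1,e_2,e_3,\dotsc\}$ over $\mathbb Z$ is algebraically independent but does not generate $\Lambda_{\mathbb Z}$, since its degree-$1$ part is $2\mathbb Z\,e_1$). The write-up should therefore keep the field hypothesis confined to precisely the dimension-counting step.
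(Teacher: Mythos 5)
Your proposal is correct and follows essentially the same route as the paper: decompose $\Lambda_R$ by degree, observe that $\{u_\lambda\}_{\lambda\vdash d}$ spans (resp.\ is linearly independent in) the rank-$p(d)$ free module $\Lambda_R^{(d)}$, and convert between spanning/independence of these monomials and generation/algebraic independence of $\{u_n\}$, using the surjective-endomorphism fact for the forward direction and a dimension count over a field for the converse. Your added remarks --- the explicit splitting of a relation into weighted-homogeneous parts and the counterexample $\{2e_1,e_2,\dotsc\}$ over $\mathbb Z$ --- are correct and make the asymmetry between the two implications more transparent than the paper's terser write-up.
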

\begin{proof}
We have a grading $\Lambda_R=\bigoplus_{n\geq0}\Lambda_R^n$, where $\Lambda_R^n$ is the subspace of homogeneous symmetric functions of degree $n$.
Since $\Lambda_R^n$ has basis $\{m_\lambda\mid \lambda\vdash n\}$, it is a free $R$-module of rank $p(n)$, the number of partitions of $n$. For each partition $\lambda \vdash n$, let $u_\lambda = \prod_{i\geq 1}u_{\lambda_i}$.
In $\{u_n\}$ generates $\Lambda_R$ as an $R$-algebra, then $\{u_\lambda\mid\lambda\vdash n\}$ spans $\Lambda^n_R$ as an $R$-module.
It follows that $\{u_\lambda\mid \lambda\vdash n\}$ is linearly independent over $R$, so $\{u_n\}$ is 
algebraically independent over $R$.
Conversely, if $\{u_n\}$ is algebraically independent over $R$, then $\{u_\lambda\mid \lambda\vdash n\}$ is linearly independent over $R$.
If $R$ is a field, it forms a basis for $\Lambda_R^n$, so $\{u_n\}$ generates $\Lambda_R$.
\end{proof}

\begin{lemma}
\label{lemma:gen2} For each $n\geq 0$, let $u_n\in\Lambda_R$ be  homogeneous of degree $n$. Then $\{u_n\}_{n\geq0}$ generates $\Lambda_{R}$ if and only if $\langle u_n,p_n \rangle$ is a unit in $R$ for all $n\geq 0$. 
\end{lemma}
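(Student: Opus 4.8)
The plan is to connect the scalar $\langle u_n,p_n\rangle$ to the expansion of $u_n$ in the basis of complete homogeneous symmetric functions, and then run a degree-by-degree induction. Recall that $\{h_\lambda\}_{\lambda\in\Par}$ is an $R$-basis of $\Lambda_R$ for every commutative ring $R$ (it is the dual basis of $\{m_\mu\}$ for the Hall pairing, and the change of basis to $\{m_\mu\}$ is invertible over $\mathbb Z$). So write $u_n=\sum_{\lambda\vdash n}a^{(n)}_\lambda h_\lambda$ with $a^{(n)}_\lambda\in R$. Since $p_n=m_{(n)}$ and $\langle h_\lambda,m_\mu\rangle=\delta_{\lambda\mu}$, bilinearity in the first argument gives $\langle u_n,p_n\rangle=a^{(n)}_{(n)}$; that is, the condition ``$\langle u_n,p_n\rangle$ is a unit'' says precisely that the coefficient of $h_n$ in the $h$-expansion of $u_n$ is a unit of $R$.

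The one structural fact needed is a length–triangularity for products. For a partition $\lambda$ with $\ell(\lambda)$ nonzero parts put $u_\lambda=\prod_i u_{\lambda_i}$. Expanding each factor $u_{\lambda_i}$ in the $h$-basis and using $h_\mu h_\nu=h_{\mu\cup\nu}$ together with $\ell(\mu\cup\nu)=\ell(\mu)+\ell(\nu)$, every $h_\mu$ occurring in $u_\lambda$ has $\ell(\mu)\geq\ell(\lambda)$, with equality only for $\mu=\lambda$, where the coefficient is $\prod_i a^{(\lambda_i)}_{(\lambda_i)}=\prod_i\langle u_{\lambda_i},p_{\lambda_i}\rangle$. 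In particular, if $\ell(\lambda)\geq 2$ then $u_\lambda$ is an $R$-combination of the $h_\mu$ with $\ell(\mu)\geq 2$, so it does not involve $h_n$.

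Now the two directions. For $(\Rightarrow)$, suppose $\{u_n\}$ generates $\Lambda_R$. As each $u_m$ is homogeneous and $\Lambda_R=\bigoplus_m\Lambda_R^m$, writing $h_n$ as an $R$-polynomial in the $u_m$ and extracting the degree-$n$ part gives $h_n=\sum_{\lambda\vdash n}c_\lambda u_\lambda$ with $c_\lambda\in R$. Applying the $R$-linear functional ``coefficient of $h_n$'' on $\Lambda_R^n$, by the triangularity above only $\lambda=(n)$ contributes on the right, so $1=c_{(n)}a^{(n)}_{(n)}=c_{(n)}\langle u_n,p_n\rangle$ and $\langle u_n,p_n\rangle$ is a unit (the case $n=0$ is trivial, since $u_0=1=p_0$). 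For $(\Leftarrow)$, suppose every $\langle u_n,p_n\rangle$ is a unit and let $S\subseteq\Lambda_R$ be the $R$-subalgebra generated by $\{u_m\}$. I claim $\Lambda_R^n\subseteq S$ for all $n$, by strong induction, the case $n=0$ being clear. For $\mu\vdash n$ with $\ell(\mu)\geq 2$ every part $\mu_i$ satisfies $\mu_i<n$, so $h_\mu=\prod_i h_{\mu_i}\in S$ by the inductive hypothesis; and from the $h$-expansion of $u_n$,
\[
\langle u_n,p_n\rangle\, h_n \;=\; u_n-\sum_{\mu\vdash n,\ \ell(\mu)\geq 2} a^{(n)}_\mu h_\mu \;\in\; S,
\]
whence $h_n\in S$ since $\langle u_n,p_n\rangle$ is a unit. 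As $\{h_\mu\}_{\mu\vdash n}$ spans $\Lambda_R^n$, this yields $\Lambda_R^n\subseteq S$, and summing over $n$ gives $S=\Lambda_R$.

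I expect the only delicate points to be bookkeeping ones: insisting on the $h$-basis (defined over an arbitrary commutative ring, unlike the power-sum basis) so that ``unit in $R$'', rather than merely ``nonzero'', is exactly the right condition, and checking that the minimal-length term of $u_\lambda$ is genuinely $\bigl(\prod_i\langle u_{\lambda_i},p_{\lambda_i}\rangle\bigr)h_\lambda$ with all other terms strictly longer. Equivalently, one may phrase everything through the graded $R$-algebra endomorphism $\phi\colon\Lambda_R\to\Lambda_R$, $h_n\mapsto u_n$: in the $h$-basis of each $\Lambda_R^n$, after ordering partitions by number of parts, its matrix is lower triangular with determinant $\prod_{\lambda\vdash n}\prod_i\langle u_{\lambda_i},p_{\lambda_i}\rangle$, and $\{u_n\}$ generates iff $\phi$ is onto iff each such determinant is a unit; since a surjective endomorphism of a finitely generated module over a commutative ring is an isomorphism, this recovers the stated equivalence.
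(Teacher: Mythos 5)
Your proof is correct and follows essentially the same route as the paper: both identify $\langle u_n,p_n\rangle$ as the coefficient of $h_n$ in the $h$-expansion of $u_n$ and run the same degree-by-degree induction for the ``if'' direction. The only difference is in the converse, where the paper invokes Lemma~\ref{lemma:gen1} (generation implies algebraic independence) to compare coefficients of $u_n$, whereas you extract the coefficient of $h_n$ directly via the length-triangularity of $u_\lambda$ in the $h$-basis --- a self-contained variant that works equally well.
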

\begin{proof}
    By \eqref{eq:hall-inner-prod}, $\{m_\lambda\mid\lambda\in\Par\}$ and $\{h_\lambda\mid\lambda\in\Par\}$ are dual bases with respect to the Hall inner product, so
    \begin{equation}
        \label{eq:expansion}
        u_n = \langle u_n, m_n\rangle h_n + \sum_{\mu \vdash n,\; \mu\neq (n)} \langle u_n, m_\mu\rangle  h_\mu.
    \end{equation}
    Since $m_n=p_n$, the coefficient of $h_n$ in the above expansion is $\langle u_n,p_n\rangle$.
    By induction on $n$, we may assume that $h_0,\dotsc,h_{n-1}$ lie in the $R$-subalgebra  generated by $\{u_0,\dotsc,u_{n-1}\}$.
    If $\langle u_n,p_n\rangle$ is a unit, $h_n$ lies in the $R$-algebra generated by $\{h_0,\dotsc,h_{n-1},u_n\}$, and hence the $R$-algebra generated by $\{u_0,\dotsc,u_n\}$.
    Since $\{h_n\}_{n\geq 0}$ generates $\Lambda_{R}$ as a $R$-algebra, it follows that $\{u_n\}_{n\geq 0}$ generates $\Lambda_{R}$ if $\langle u_n,p_n\rangle$ is a unit  for all $n\geq 0$.

    Conversely, suppose $\{u_n\}_{n\geq0}$ generates $\Lambda_R$.
    Then, for each $n$, $h_n$ is a polynomial in $u_0,\dotsc,u_n$ with coefficients in $R$.
    Since the sum on the right-hand side of \eqref{eq:expansion} involves only $h_0,\dotsc,h_{n-1}$, we have
    \begin{displaymath}
        u_n = \langle u_n,p_n\rangle f_1(u_0,\dotsc,u_n) + f_2(u_0,\dotsc,u_{n-1})
    \end{displaymath}
    for some polynomials $f_1$ and $f_2$ with coefficients in $R$.
    By Lemma~\ref{lemma:gen1}, $\{u_n\}_{n\geq 0}$ is algebraically independent, allowing us to equate the coefficient of $u_n$ on both sides, implying that $\langle u_n,p_n\rangle$ is a unit.
\end{proof}

\section{Generators for the familiar bases over $\mathbb{F}$}
In this section, we generally work with a field $\mathbb F$ of characteristic zero.
However, $\mathbb F$ always can be replaced by any $\mathbb Q$-algebra $R$.
\subsection{Skew monomial symmetric functions}
We shall generalize Mead's result to the skew case in this section.
\begin{definition}[Domino tabloid, {\cite[Ex.~I.6.5]{MR3443860}}]
\label{def:domino}
Given partitions $\lambda$ and $\mu$, a domino tabloid of shape $\lambda$ and type $\mu$ is a filling of the diagram of $\lambda$ with non-overlapping dominoes of lengths $\mu_1,\mu_2,\dots$, where a domino of length $k$ consists of $k$ consecutive squares in the same row. The weight of a domino tabloid is the product of the lengths of the leftmost domino in each row.
\end{definition}
We will use $\xi\cup \mu$ to denote the partition obtained by taking the parts of $\xi$ and $\mu$ and arranging them in weakly decreasing order. For example, $(2,1)\cup(3,1) = (3,2,1,1)$.
\begin{definition}[Refinement]\label{defn:refinement}
Say that a partition $\lambda=(\lambda_1,\lambda_2,\dotsc)$ refines a positive integer $k$ if there exist $1\leq i_1<i_2<\dotsb$ such that $\lambda_{i_1}+\lambda_{i_2}+\dotsb = k$.
\end{definition}
\begin{theorem}
Let $\{\lambda^n/\mu^n\}$ be a graded skew partition sequence, $u_n=m_{\lambda^n/\mu^n}$, then the set $\{u_n\}_{n\geq0}$ is algebraically independent and generates $\Lambda_{\mathbb F}$ if and only if, for each $n\geq1$, $\lambda^n$ refines $n$.
\label{theorem:mlamu}
\end{theorem}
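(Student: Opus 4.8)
By Lemma~\ref{lemma:gen1} and Lemma~\ref{lemma:gen2} applied with $R=\mathbb{F}$, for which the units are exactly the nonzero elements, the sequence $\{u_n\}_{n\ge 0}$ is algebraically independent and generates $\Lambda_{\mathbb F}$ if and only if $\langle u_n,p_n\rangle\neq 0$ for all $n\ge 1$ (the case $n=0$ being automatic, as $u_0=p_0=1$). By the definition of the skew monomial functions, $\langle u_n,p_n\rangle=\langle m_{\lambda^n/\mu^n},p_n\rangle=\langle m_{\lambda^n},\,m_{\mu^n}p_n\rangle$, so it suffices to prove the following: if $\lambda,\mu$ are partitions with $|\lambda|-|\mu|=n\ge 1$, then $\langle m_\lambda,\,m_\mu p_n\rangle\neq 0$ if and only if $\lambda$ refines $n$ in the sense of Definition~\ref{defn:refinement}.

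To compute this pairing I would pass to the skewing operator $p_n^{\perp}$, the adjoint of multiplication by $p_n$ for the Hall inner product, so that $\langle m_\lambda,m_\mu p_n\rangle=\langle p_n^{\perp}m_\lambda,\,m_\mu\rangle$. Because $\{h_\nu\}$ and $\{m_\nu\}$ are dual bases, $\langle m_\alpha,p_n\rangle$ equals the coefficient of $h_\alpha$ in $p_n$; in particular it is $0$ unless $\alpha\vdash n$. Feeding this into the standard comultiplication formula $\Delta m_\lambda=\sum m_\alpha\otimes m_\beta$, the sum being over ordered pairs of partitions $(\alpha,\beta)$ with $\alpha\cup\beta=\lambda$, one obtains
\[
p_n^{\perp}m_\lambda=\sum_{\rho}\langle m_\rho,p_n\rangle\,m_{\lambda\setminus\rho},
\qquad\text{so}\qquad
\langle m_\lambda,m_\mu p_n\rangle=\sum_{\rho}\langle m_\rho,p_n\rangle\,\langle m_{\lambda\setminus\rho},\,m_\mu\rangle,
\]
the sums running over partitions $\rho$ of $n$ whose multiset of parts is contained in that of $\lambda$, with $\lambda\setminus\rho$ the complementary multiset. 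Such a $\rho$ exists precisely when $\lambda$ refines $n$. Hence, if $\lambda$ does not refine $n$, the sum is empty, $p_n^{\perp}m_\lambda=0$, and the pairing vanishes; this is the ``only if'' direction. (Taking $\mu^n=\emptyset$ for all $n$ recovers Mead's theorem, since every partition refines its own size.)

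For the ``if'' direction I must show the displayed sum is nonzero whenever it is nonempty, and the crux is a sign count. Write $\ell(\nu)$ for the number of parts of a partition $\nu$. Two facts are needed. First, the expansion of $p_n$ in the basis $\{h_\nu\}$ (equivalently, Newton's identities) shows that for $\rho\vdash n$ the coefficient $\langle m_\rho,p_n\rangle$ is nonzero with sign $(-1)^{\ell(\rho)-1}$. Second, the \emph{key lemma}: for partitions $\sigma,\tau$ of a common size, $\langle m_\sigma,m_\tau\rangle$ is nonzero with sign $(-1)^{\ell(\sigma)+\ell(\tau)}$. Granting these, and noting $|\lambda\setminus\rho|=|\mu|$, every summand $\langle m_\rho,p_n\rangle\langle m_{\lambda\setminus\rho},m_\mu\rangle$ is nonzero of sign $(-1)^{\ell(\rho)-1+\ell(\lambda\setminus\rho)+\ell(\mu)}=(-1)^{\ell(\lambda)+\ell(\mu)-1}$, which does not depend on $\rho$ because $\ell(\lambda\setminus\rho)=\ell(\lambda)-\ell(\rho)$. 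A nonempty sum of nonzero terms all having one sign is nonzero, completing the argument modulo the key lemma.

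The main obstacle is thus the key lemma, i.e.\ pinning down the signs of the Gram matrix of the monomial basis for the Hall inner product (equivalently, of the inverse of the transition matrix from $\{h_\nu\}$ to $\{m_\nu\}$). I would prove it by expanding the monomials in power sums, $m_\sigma=\sum_\rho x_{\sigma\rho}\,p_\rho$, and checking that $(-1)^{\ell(\sigma)+\ell(\rho)}x_{\sigma\rho}\ge 0$ and that $x_{\sigma,(|\sigma|)}\neq 0$; then $\langle m_\sigma,m_\tau\rangle=\sum_\rho x_{\sigma\rho}x_{\tau\rho}z_\rho$ has all its nonzero terms of sign $(-1)^{\ell(\sigma)+\ell(\tau)}$ and a nonzero term at $\rho=(|\sigma|)$, which gives the lemma. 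The claim about the $x_{\sigma\rho}$ comes from taking the logarithm of the identity $\prod_{i\ge 1}\bigl(1-\sum_{k\ge 1}u_k x_i^k\bigr)=\sum_\sigma(-1)^{\ell(\sigma)}m_\sigma\prod_k u_k^{\,r_k(\sigma)}$ (where $r_k(\sigma)$ is the multiplicity of $k$ as a part of $\sigma$): the logarithm equals $-\sum_\tau c_\tau\,p_{|\tau|}\prod_k u_k^{\,r_k(\tau)}$ with every $c_\tau>0$, so re-exponentiating expresses $(-1)^{\ell(\sigma)}m_\sigma$ as a sum over the ways of partitioning the multiset of parts of $\sigma$ into nonempty blocks, each way contributing a power sum $p_\rho$ — with $\rho$ the multiset of block sizes — and a strictly nonzero coefficient of sign $(-1)^{\ell(\rho)}$; the one-block term accounts for $x_{\sigma,(|\sigma|)}\neq 0$. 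Alternatively, both sign statements can be extracted from the domino-tabloid expansions of the transition matrices relating $\{p_\nu\}$ and $\{m_\nu\}$ (Definition~\ref{def:domino}).
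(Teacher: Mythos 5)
Your argument is correct, and it takes a genuinely different route from the paper. The paper also reduces, via Lemmas~\ref{lemma:gen1} and~\ref{lemma:gen2}, to showing $\langle m_{\lambda/\mu},p_n\rangle\neq 0$ iff $\lambda$ refines $n$, but it then expands \emph{everything} in the power-sum basis using the domino-tabloid transition matrices ($p_\lambda=\sum_\mu\epsilon_\mu\epsilon_\lambda w_{\lambda\mu}h_\mu$, hence $m_\lambda=\sum_\nu\epsilon_\nu\epsilon_\lambda z_\nu^{-1}w_{\nu\lambda}p_\nu$), arriving at the closed formula $\langle m_{\lambda/\mu},p_n\rangle=(-1)^{n-1}\epsilon_\mu\epsilon_\lambda\sum_{\xi\vdash|\mu|}z_\xi^{-1}w_{\xi\mu}w_{(\xi\cup(n))\lambda}$ --- a sum over partitions $\xi$ of $|\mu|$ with all terms nonnegative, whose nonvanishing is read off from the single term $\xi=(|\mu|)$. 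You instead push $p_n$ across as $p_n^{\perp}$ and use the coproduct of $m_\lambda$, so your sum runs over sub-partitions $\rho$ of $\lambda$ of size $n$; this makes the ``only if'' direction an empty-sum triviality (no such $\rho$ exists unless $\lambda$ refines $n$) and isolates all the work into a clean, self-contained sign lemma: every entry $\langle m_\sigma,m_\tau\rangle$ of the Gram matrix of the monomial basis is nonzero of sign $(-1)^{\ell(\sigma)+\ell(\tau)}$. That lemma is correct (and your $\rho=(|\sigma|)$ term argument for it is essentially the paper's computation run on a smaller pairing), and your sign bookkeeping $(-1)^{\ell(\rho)-1+\ell(\lambda\setminus\rho)+\ell(\mu)}=(-1)^{\ell(\lambda)+\ell(\mu)-1}$ checks out, so the nonempty sum is sign-definite and nonzero. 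What your route buys is conceptual clarity and a reusable statement about the monomial Gram matrix; what the paper's route buys is the explicit nonnegative-integer formula for $\sum_{\xi}z_\xi^{-1}w_{\xi\mu}w_{(\xi\cup(n))\lambda}$, which is exactly what it needs later to decide when the pairing equals $\pm1$ in the $\mathbb Z$-coefficient refinement (Theorem~\ref{theorem:SM}); your decomposition would not set that analysis up as directly.
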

\begin{proof}By Lemmas~\ref{lemma:gen1} and ~\ref{lemma:gen2}, it is sufficient to prove that\linebreak $\langle m_{\lambda^n/\mu^n},p_n \rangle \neq 0$ if and only if $\lambda^n$ refines $n$ (to simplify notation, let us call $\lambda^n$ as $\lambda$ and $\mu^n$ as $\mu$). Since
$$
p_\lambda = \sum_\mu \epsilon_\mu \epsilon_\lambda w_{\lambda \mu} h_\mu,
$$
where, for any partition $\lambda$, $\epsilon_\lambda$ denotes the sign of a permutation with cycle type $\lambda$, and $w_{\lambda \mu}$ denotes the sum of the weights of all domino tabloids of shape $\lambda$ and type $\mu$ (see \cite[Ex.~I.6.8]{MR3443860}). Therefore we have
%In particular, such $\lambda$'s are $\geq$ $\mu$  in the dominance order and $\lambda$ obtained from $\mu$ by clubbing some of its parts.
\begin{equation}
\label{equation-sm:inn}
\langle m_{\lambda},p_\mu \rangle = \epsilon_\mu \epsilon_\lambda w_{\mu\lambda }
\end{equation}
by the duality of $\{m_\lambda\}_{\lambda\in\Par}$ and $\{h_\lambda\}_{\lambda\in \Par}$ with respect to the Hall inner product.
Hence, $m_\lambda=\sum\limits_{\nu}  \epsilon_\nu \epsilon_\lambda z_{\nu}^{-1} w_{\nu\lambda} p_\nu.$  

Now 
\begin{align*}
\langle m_{\lambda/\mu},p_n \rangle & = \langle m_{\lambda},m_\mu p_n \rangle\\
 & =\Big\langle \sum_{\nu\vdash|\lambda|}  \epsilon_\nu \epsilon_\lambda z_{\nu}^{-1} w_{ \nu \lambda} p_\nu ,\sum_{\xi\vdash |\mu|} \epsilon_\xi \epsilon_\mu z_{\xi}^{-1} w_{\xi \mu} p_{\xi \cup (n)} \Big\rangle\\
 &=\epsilon_\mu\epsilon_\lambda \sum_{\xi\vdash|\mu|} \epsilon_{\xi\cup (n)}\epsilon_\xi z_{\xi}^{-1} z_{\xi\cup(n)}^{-1} w_{\xi \mu} w_{(\xi\cup (n)) \lambda} z_{\xi\cup(n)}\\
 &=\epsilon_\mu\epsilon_\lambda \sum_{\xi\vdash|\mu|} \epsilon_{\xi\cup (n)}\epsilon_\xi z_{\xi}^{-1} w_{\xi \mu} w_{(\xi\cup (n)) \lambda}
\end{align*}
Since $\epsilon_{\xi\cup(n)}=(-1)^{n-1}\epsilon_\xi$,
\begin{equation}
\label{eq:skew-mono}
    \langle m_{\lambda/\mu},p_n \rangle= (-1)^{n-1}\epsilon_\mu\epsilon_\lambda \sum_{\xi\vdash|\mu|} z_{\xi}^{-1} w_{\xi \mu} w_{(\xi\cup (n)) \lambda}.
\end{equation}
Since the terms of the sum above are all positive, the inner product is nonzero if and only if the sum has at least one non-zero term.
That happens precisely when $\lambda$ refines $n$, as we can always take $\xi=(|\mu|)$.
\end{proof}
Taking $\mu^n=\emptyset $, for all $n$, we recover Mead's result as the condition in Theorem~\ref{theorem:mlamu} is trivially satisfied.

An interesting fact is that $\sum_{\xi\vdash|\mu|} z_{\xi}^{-1} w_{\xi \mu} w_{(\xi\cup (n)) \lambda}$, a priori a rational number, is always a nonnegative integer.
Finding a combinatorial interpretation of this number seems to be an interesting question.
In Theorem~\ref{theorem:SM} we characterize the pairs $(\lambda,\mu)$ for which this sum is equal to one.
 
Not all algebraically independent sequences $\{u_n\}$ of symmetric functions in $\Lambda_{\mathbb Z}$ generate $\Lambda_{\mathbb Z}$ as a $\mathbb Z$-algebra.
A well-known example is the sequence $\{p_n\}_{n\geq0}$ of power sum symmetric functions.
The following theorem refines Mead's result to $\Lambda_{\mathbb Z}$.
\begin{theorem}
Let $\{\lambda^n\}$ be a graded partition sequence and for each $n\geq0$, $u_n=m_{\lambda^n}$. Then the set $\{u_n\}_{n\geq0}$ is algebraically independent and generates $\Lambda_\mathbb{Z}$ if and only if, for each $n\geq0$, $\lambda^n=(1^n)$.
\end{theorem}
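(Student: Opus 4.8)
The plan is to reduce, via Lemmas~\ref{lemma:gen1} and~\ref{lemma:gen2}, to an elementary computation. Since generation of $\Lambda_{\mathbb Z}$ by $\{u_n\}$ already forces algebraic independence (Lemma~\ref{lemma:gen1}), it suffices to determine when $\{m_{\lambda^n}\}_{n\geq 0}$ generates $\Lambda_{\mathbb Z}$, and by Lemma~\ref{lemma:gen2} this happens exactly when $\langle m_{\lambda^n},p_n\rangle$ is a unit of $\mathbb Z$ --- that is, equals $\pm1$ --- for every $n$. The case $n=0$ is automatic ($u_0=1$, $\langle 1,p_0\rangle=1$, and $\lambda^0=\emptyset=(1^0)$), so fix $n\geq 1$, write $\lambda=\lambda^n$, let $\ell(\lambda)$ be the number of parts of $\lambda$, and let $m_i$ denote the multiplicity of $i$ in $\lambda$.

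Next I would evaluate $\langle m_\lambda,p_n\rangle$ using~\eqref{equation-sm:inn}: taking $\mu=(n)$ there gives $\langle m_\lambda,p_n\rangle=\epsilon_{(n)}\epsilon_\lambda\, w_{(n)\lambda}$, so $|\langle m_\lambda,p_n\rangle|=w_{(n)\lambda}$, the sum of the weights of all domino tabloids of shape $(n)$ and type $\lambda$. Since the shape $(n)$ is a single row, such a tabloid is just a left-to-right ordering of the multiset of dominoes whose lengths are the parts of $\lambda$, and its weight is the length of the first domino in that ordering. Counting orderings according to their first entry yields
\begin{displaymath}
    w_{(n)\lambda}=\sum_{j\,:\,m_j\geq 1} j\cdot\frac{(\ell(\lambda)-1)!}{(m_j-1)!\prod_{i\neq j}m_i!}=\frac{(\ell(\lambda)-1)!}{\prod_i m_i!}\sum_j j\,m_j=\frac{n\,(\ell(\lambda)-1)!}{\prod_i m_i!},
\end{displaymath}
using $\sum_j j\,m_j=|\lambda|=n$. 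In particular $w_{(n)\lambda}$ is a positive integer.

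It then remains to characterize when this positive integer equals $1$, i.e.\ when $n\,(\ell(\lambda)-1)!=\prod_i m_i!$. Here I would invoke two inequalities: $n\geq\ell(\lambda)$ (since $n-\ell(\lambda)=\sum_i(i-1)m_i\geq 0$, with equality iff $m_i=0$ for all $i\geq 2$), and $\prod_i m_i!\leq\ell(\lambda)!$ (since $\ell(\lambda)!/\prod_i m_i!$ is a multinomial coefficient, hence a positive integer). Together these give $n\,(\ell(\lambda)-1)!\geq\ell(\lambda)!\geq\prod_i m_i!$, and equality throughout forces $n=\ell(\lambda)$, i.e.\ $\lambda=(1^n)$; conversely $\lambda=(1^n)$ gives $w_{(n)\lambda}=1$. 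Hence $\langle m_{\lambda^n},p_n\rangle=\pm1$ for all $n\geq 1$ precisely when $\lambda^n=(1^n)$ for all $n$, which by the reductions above is exactly the condition for $\{m_{\lambda^n}\}_{n\geq 0}$ to be an algebraically independent generating set of $\Lambda_{\mathbb Z}$ (when $\lambda^n=(1^n)$ the set is $\{e_n\}_{n\geq 0}$, the classical free generating set of $\Lambda_{\mathbb Z}$). The only nontrivial steps are the closed-form evaluation of $w_{(n)\lambda}$ and the short inequality isolating the equality case; both are elementary, so I anticipate no real obstacle.
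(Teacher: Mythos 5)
Your proposal is correct and follows essentially the same route as the paper: reduce via Lemmas~\ref{lemma:gen1} and~\ref{lemma:gen2} to deciding when $\langle m_{\lambda^n},p_n\rangle=\pm1$, use \eqref{equation-sm:inn} to identify this inner product (up to sign) with $w_{(n)\lambda^n}$, and show that this equals $1$ exactly when $\lambda^n=(1^n)$. The only difference is that where the paper simply asserts ``clearly $w_{(n)\lambda}=1$ if and only if $\lambda=(1^n)$'' (justified combinatorially only later, in Lemma~\ref{lemma:SM}), you supply the explicit closed form $w_{(n)\lambda}=n(\ell(\lambda)-1)!/\prod_i m_i!$ together with a clean inequality argument, which is a valid and somewhat more quantitative way to settle that step.
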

\begin{proof}
From \eqref{equation-sm:inn}, we have  $\langle m_{\lambda},p_{(n)} \rangle = (-1)^{n-1} \epsilon_\lambda w_{(n)\lambda}.$ Clearly $w_{(n)\lambda}=1$ if and only if $\lambda=(1^n)$. Hence the theorem follows from Lemmas~\ref{lemma:gen1} and ~\ref{lemma:gen2}.
\end{proof}
Before considering the skew version of Mead's result over $\mathbb Z$, let us make the following useful definition.
\begin{definition}[Rectangular]
    We say that a partition $\lambda$ of $n$ is rectangular if $\lambda=(a^b)$ for some positive integers $a, b$ such that $ab=n$.
\end{definition}
\begin{theorem}
\label{theorem:SM}
    Let $\{\lambda^n/\mu^n\}$ be a graded skew partition sequence and for each $n\geq0$, $u_n=m_{\lambda^n/\mu^n}$. Then the set $\{u_n\}_{n\geq0}$ is algebraically independent and generates $\Lambda_\mathbb{Z}$ if and only if, for each $n\geq0$, $\lambda^n$ and $\mu^n$ is one of the following pair:
    \begin{enumerate}
        \item $\lambda^n=(1^{m+n})$ and $\mu^n=(1^m)$, 
        \item $\lambda^n=(c^d,1^n)$ with $c>n$, cd=m and $\mu^n=(1^m)$, 
        \item $\lambda^n=(1^{m+n})$ and $\mu^n=(a^b)$ with $ab=m$,
        \item $\lambda^n=(c^d)\cup(1^n)$ and $\mu^n=(a^b)$ with $\gcd(a,c)=1$ and $c>n$.
    \end{enumerate}
\end{theorem}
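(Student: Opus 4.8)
The plan is to combine Lemmas~\ref{lemma:gen1} and~\ref{lemma:gen2} exactly as in the proof of Theorem~\ref{theorem:mlamu}: the sequence $\{u_n\}$ is an algebraically independent generating set of $\Lambda_{\mathbb Z}$ if and only if $\langle m_{\lambda^n/\mu^n},p_n\rangle=\pm 1$ for every $n\geq 1$ (the case $n=0$ being automatic). So the entire content reduces to determining, for a skew partition $\lambda/\mu$ of $n$, when the nonnegative integer
\begin{equation*}
    N(\lambda,\mu,n):=\sum_{\xi\vdash|\mu|} z_{\xi}^{-1} w_{\xi \mu} w_{(\xi\cup (n)) \lambda}
\end{equation*}
appearing in~\eqref{eq:skew-mono} equals $1$; the four listed pairs are precisely the solutions of $N(\lambda,\mu,n)=1$.

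First I would record the basic facts about the domino numbers $w_{\nu\rho}$: $w_{\nu\rho}=0$ unless $\nu$ is a \emph{coarsening} of $\rho$ in the sense that $\rho$ can be assembled into dominoes of the sizes prescribed by $\nu$, the weight being the product over rows of the leftmost domino length. Since every $\xi\vdash|\mu|$ contributes the honestly positive term $z_\xi^{-1}w_{\xi\mu}$ (taking $\xi=(|\mu|)$ always gives a nonzero contribution, with $w_{(|\mu|)\mu}=\mu_1$ if $\mu_1=|\mu|$, i.e. $\mu$ a single row, and more generally $w$ is nonzero whenever $\mu$ refines $|\mu|$, which is vacuous here), the sum $N(\lambda,\mu,n)$ is at least as large as its $\xi=(|\mu|)$ term, namely $|\mu|^{-1}\mu_1\cdots w_{((|\mu|)\cup(n))\lambda}$ when $\mu=(\mu_1)$ has one part. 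The strategy is then: (i) show $N=1$ forces $\mu$ to be rectangular, $\mu=(a^b)$; (ii) with $\mu=(a^b)$, analyze the few $\xi$ that can contribute and force $\lambda$ into one of the two shapes $(1^{|\mu|+n})$ or $(c^d)\cup(1^n)$; (iii) in the latter case extract the arithmetic conditions $c>n$ and $\gcd(a,c)=1$; and (iv) conversely verify $N=1$ in each of the four families by exhibiting the unique contributing $\xi$ and computing. Cases (1) and (2) are the specializations $\mu=(1^m)$ (so $a=1$) of (3) and (4), kept separate because when $a=1$ the gcd condition is automatic and $\lambda$ may have the block $(c^d)$ sitting genuinely above the $1$'s rather than merely interleaved; I would treat $\mu=(1^m)$ first as a warm-up since there $w_{\xi(1^m)}\ne 0$ only for $\xi$ whose parts are $\le 2$ times... in fact $w_{\xi(1^m)}=\prod(\text{leftmost domino in each row})$ and all dominoes have length $1$, so $w_{\xi(1^m)}=1$ iff every part of $\xi$ is... one must unwind that $w_{(1^m)(1^m)}=1$ and that this is the only $\xi$ of shape a single column that matters, etc.

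The main obstacle I anticipate is step (ii)/(iii): controlling which $\xi\vdash|\mu|=ab$ make $w_{\xi,(a^b)}$ nonzero, and then for each such $\xi$ deciding whether $w_{(\xi\cup(n)),\lambda}$ is nonzero and what it contributes, so as to prove that the total is $1$ exactly in the claimed cases. The delicate point is that $\xi\cup(n)$ reshuffles the part $n$ into the multiset of parts of $\xi$, so the coarsening relation ``$\xi\cup(n)$ coarsens $\lambda$'' interacts with both the block structure $(c^d)$ and the appended column $(1^n)$; the condition $\gcd(a,c)=1$ should emerge from requiring that no $\xi\ne(a^b)$ (equivalently, no nontrivial regrouping of the $a$-blocks of $\mu$ into larger dominoes) can simultaneously tile $\lambda$, while $c>n$ should emerge from requiring the column $(1^n)$ to not be absorbable into the $(c^d)$ block. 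I would handle this by a careful case analysis on whether $\lambda$ has any part equal to $1$ beyond those forced, and on the possible ``merge patterns'' of the parts of $\xi$, using that $z_\xi^{-1}w_{\xi\mu}$ is strictly positive to conclude that the presence of even one extra nonzero term pushes $N$ above $1$. For the converse direction (iv) the computation is routine once the unique contributing $\xi$ is identified, so the real work is the forward implication's combinatorial bookkeeping.
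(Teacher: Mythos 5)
Your reduction is the same as the paper's: by Lemmas~\ref{lemma:gen1} and~\ref{lemma:gen2} and Eq.~\eqref{eq:skew-mono}, everything comes down to deciding when $N(\lambda,\mu,n)=\sum_{\xi\vdash|\mu|} z_{\xi}^{-1} w_{\xi \mu} w_{(\xi\cup (n)) \lambda}$ equals $1$, and your case skeleton (force $\mu$ rectangular, then force $\lambda$ into $(1^{m+n})$ or $(c^d)\cup(1^n)$, then extract $c>n$ and $\gcd(a,c)=1$) matches the paper's. But the part you defer as ``routine bookkeeping'' is where the actual content lies, and the mechanism you sketch for it is wrong. You assert that in each of the four good families there is ``the unique contributing $\xi$''; this is false already in the simplest case $\mu=(1^m)$, $\lambda=(1^{m+n})$, where \emph{every} $\xi\vdash m$ contributes (since $w_{\xi(1^m)}=1$ and $w_{(\xi\cup(n))(1^{m+n})}=1$) and the total is $1$ only because of the identity $\sum_{\xi\vdash m}z_\xi^{-1}=1$. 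The same identity, in the dilated form $\sum_{\gamma\vdash b}\prod_i(a i)^{-m_i}(m_i!)^{-1}a^{l(\gamma)}=\sum_{\gamma\vdash b}z_\gamma^{-1}=1$, is what makes the rectangular cases work; without it you cannot verify $N=1$ in any of cases (1)--(4).

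Similarly, your proposed origin of the condition $\gcd(a,c)=1$ --- that ``no $\xi\neq(a^b)$ can simultaneously tile $\lambda$'' --- is not the right mechanism. The $\xi$ with $w_{\xi(a^b)}w_{\xi(c^d)}\neq 0$ are exactly those whose parts are all divisible by $q=\mathrm{lcm}(a,c)$, and in general there are many of them (e.g.\ $a=2$, $c=3$, $m=12$ gives $\xi\in\{(12),(6,6)\}$, each contributing $1/2$). The correct argument, as in the paper, is that each such $\xi=q\gamma$ contributes $z_\xi^{-1}a^{l(\gamma)}c^{l(\gamma)}=z_\gamma^{-1}(ac/q)^{l(\gamma)}$, so the total is at least $ac/q$ with equality to $1$ exactly when $ac=q$, i.e.\ $\gcd(a,c)=1$. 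You are also missing the auxiliary inequalities the paper isolates as Lemma~\ref{lemma:SM} --- in particular the super-multiplicativity $w_{(\xi\cup\eta)(\lambda\cup\mu)}\geq w_{\xi\lambda}w_{\eta\mu}$, which is what lets one decouple the appended row $(n)$ from the rest of $\xi$ and reduce to the computations above, and the bounds $w_{(n)\lambda}\geq n$ for non-rectangular $\lambda$ and $w_{\xi\lambda}>1$ for $\lambda\neq(1^n)$, which drive the exclusions in the forward direction. As it stands the proposal is a correct framing of the problem with the decisive computations absent and, where sketched, incorrect.
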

The proof of Theorem~\ref{theorem:SM} uses the following lemma:
\begin{lemma}
\label{lemma:SM}
    Let $\xi$ and $\lambda$ be partitions of $n$ and, $\eta$ and $\mu$ be partitions of $m$.
    Then  we have 
    \begin{enumerate}
        \item $w_{(n)\lambda}\geq n$ if $\lambda$ is non-rectangular,
        \item $w_{\xi\lambda}>1$ if $\lambda\neq (1^n)$,
        \item $w_{(\xi\cup\eta)(\lambda\cup\mu)}\geq w_{\xi\lambda} w_{\eta\mu}$.
    \end{enumerate}
\end{lemma}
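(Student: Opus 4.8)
The plan is to prove the three parts separately, using throughout the combinatorial description of $w_{\sigma\tau}$ (with $\sigma$ the shape and $\tau$ the type) as the sum, over all domino tabloids of shape $\sigma$ and type $\tau$ (Definition~\ref{def:domino}), of the weight, where every such weight is a product of domino lengths and hence a positive integer. For (1) I would first record a closed formula for $w_{(n)\lambda}$. A domino tabloid of shape $(n)$ and type $\lambda$ is simply a left-to-right arrangement in a single row of dominoes of lengths $\lambda_1,\lambda_2,\dots$, that is, a distinct permutation of the multiset of parts of $\lambda$, and its weight is the length of the first domino. If $\ell$ is the number of positive parts of $\lambda$ and $m_j$ the multiplicity of $j$ among them, then passing to distinguishable dominoes — where each part sits first in exactly $(\ell-1)!$ of the $\ell!$ orderings, and each distinct arrangement is counted $\prod_j m_j!$ times — gives
\[
w_{(n)\lambda}=\frac{(\ell-1)!\,n}{\prod_j m_j!}=\frac{n}{\ell}\binom{\ell}{m_1,m_2,\dots}.
\]
Thus $w_{(n)\lambda}\geq n$ is equivalent to the multinomial coefficient being at least $\ell$. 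When $\lambda$ is non-rectangular it has at least two distinct part sizes, so some multiplicity satisfies $1\leq m_a\leq\ell-1$, and then $\binom{\ell}{m_1,m_2,\dots}\geq\binom{\ell}{m_a}\geq\binom{\ell}{1}=\ell$, which finishes (1). (The hypothesis is sharp: for a rectangle $(a^b)$ with $b>1$ one has $\ell=b$ and $w_{(n)(a^b)}=n/b<n$.)

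For (2), I may assume there is at least one domino tabloid of shape $\xi$ and type $\lambda$, since otherwise $w_{\xi\lambda}=0$ and it contributes nothing in the applications. As $\lambda\neq(1^n)$, it has a part $k=\lambda_1\geq2$. In any such tabloid $T$, the length-$k$ domino lies in some row $r$; the dominoes occupying row $r$ form a composition of the length of $r$ and may be reordered among themselves without disturbing validity or the other rows. Reordering so that the length-$k$ domino comes first produces a valid tabloid $T'$ of the same shape and type whose weight equals $k$ times a product of positive integers, hence is at least $k\geq2$. Since $w_{\xi\lambda}$ is a sum of positive weights that includes $\mathrm{wt}(T')$, we conclude $w_{\xi\lambda}\geq2$.

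The main work is in (3), where I would exhibit a weight-multiplicative injection $\phi$ from pairs $(T_1,T_2)$, with $T_1$ a tabloid of shape $\xi$ and type $\lambda$ and $T_2$ a tabloid of shape $\eta$ and type $\mu$, into the domino tabloids of shape $\xi\cup\eta$ and type $\lambda\cup\mu$. The rows of $\xi\cup\eta$, read from top to bottom, are the merge of the rows of $\xi$ and those of $\eta$ in weakly decreasing order of length; for each length value I declare the topmost rows of that length (as many as $\xi$ has parts of that length) to be the ``$\xi$-slots'' and the remaining ones the ``$\eta$-slots,'' so that reading the $\xi$-slots top to bottom recovers exactly the row lengths of $\xi$, and likewise for $\eta$. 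Then $\phi(T_1,T_2)$ copies the rows of $T_1$ into the $\xi$-slots and the rows of $T_2$ into the $\eta$-slots; this is a valid domino tabloid of type $\lambda\cup\mu$, its weight factors over rows as $\mathrm{wt}(T_1)\,\mathrm{wt}(T_2)$, and $\phi$ is injective because the allocation of rows into $\xi$-slots and $\eta$-slots depends only on $\xi$ and $\eta$. Summing then yields
\[
w_{\xi\lambda}\,w_{\eta\mu}=\sum_{T_1,T_2}\mathrm{wt}(T_1)\,\mathrm{wt}(T_2)=\sum_{T_1,T_2}\mathrm{wt}\bigl(\phi(T_1,T_2)\bigr)=\sum_{T\in\mathrm{Im}\,\phi}\mathrm{wt}(T)\leq w_{(\xi\cup\eta)(\lambda\cup\mu)},
\]
the last inequality because every domino tabloid has positive weight, so discarding the ones outside $\mathrm{Im}\,\phi$ only decreases the sum. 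The one point requiring care — and the likely main obstacle — is precisely this canonical ``$\xi$-slot / $\eta$-slot'' bookkeeping when $\xi$ and $\eta$ share a part size: without a fixed convention $\phi$ need not be injective, and indeed the inequality can be strict, e.g.\ $\xi=\eta=(2)$, $\lambda=(2)$, $\mu=(1,1)$ gives $w_{\xi\lambda}\,w_{\eta\mu}=2<4=w_{(2,2)(2,1,1)}$.
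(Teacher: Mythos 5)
Your proof is correct, and it is worth comparing to the paper's. For part (1) you take a genuinely different route: you derive the exact closed formula $w_{(n)\lambda}=\tfrac{n}{\ell}\binom{\ell}{m_1,m_2,\dots}$ by a symmetrization count and reduce the inequality to $\binom{\ell}{m_1,m_2,\dots}\geq\ell$, whereas the paper argues directly by constructing, for each part $a$ of multiplicity $m_a$, a family of $m_a$ distinct one-row tabloids of weight $a$ (prefix of $i$ dominoes of length $a$ followed by a domino of a different length $b$), so that summing $am_a$ over distinct parts already accounts for $n$. Your version buys an exact value (and the sharpness remark for rectangles); the paper's is shorter and avoids the formula. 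Parts (2) and (3) follow essentially the paper's approach: for (2) the paper slides a domino of length $\geq 2$ to the left end of its row to produce a tabloid of weight $>1$, which is the same move as your reordering; for (3) the paper also concatenates a tabloid of shape $\xi$ with one of shape $\eta$ and multiplies weights, but leaves the injectivity of this pairing implicit --- your explicit $\xi$-slot/$\eta$-slot convention for rows of equal length is exactly the bookkeeping needed to make that step airtight, and your example $w_{(2)(2)}w_{(2)(1,1)}=2<4=w_{(2,2)(2,1,1)}$ correctly shows the inequality can be strict. One caveat you rightly flag and the paper does not: statement (2) is literally false when no tabloid of shape $\xi$ and type $\lambda$ exists (e.g.\ $\xi=(1,1)$, $\lambda=(2)$ gives $w_{\xi\lambda}=0$); your existence hypothesis is the correct reading, and it is satisfied in all the places the lemma is invoked.
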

\begin{proof}
 
    Let $a$ be a part of $\lambda$ occurring with multiplicity $m_a$.
    Since $\lambda$ is not rectangular, it has a part $b\neq a$.
    For each $i$ ($1\leq i\leq m_a$), construct a domino tabloid of shape $(n)$ and weight $a$ as follows: place $i$ dominoes of length $a$, followed by a domino of size $b$, and then complete the tabloid in any manner.
    This gives rise to $m_a$ distinct domino tabloids, each of weight $a$, contributing $am_a$ to $w_{(n)\lambda}$.
    Repeating this over all distinct parts of $\lambda$ will give a total contribution of $n$ to $w_{(n)\lambda}$, proving the first claim.

    For the second claim, note that $w_{\xi\lambda}=1$ if and only if $\lambda=(1^n)$.
    For if $w_{\xi\lambda}=1$, there is a unique domino tabloid of shape $\xi$, type $\lambda$ and weight $1$.
    If $\lambda$ is not a column, this tabloid must have a domino of size greater than one.
    Sliding it all the way to the left in its row will give another domino tabloid of shape $\xi$, type $\lambda$, and weight more than $1$.

     For the third claim, fix a tabloid $T$ of shape $\eta$ and type $\lambda$ we can vary all the tabloids of shape $\xi$ and type $\mu$ to get tabloids of shape $\xi\cup\eta$ and type $\mu\cup\lambda$. Now the sum of all such tabloids is $w_{\xi\mu} L$ where $L$ is the weight of the domino tabloid $T$. Now we shall vary the domino tabloid of shape $\eta$ and type $\lambda$ and all those weights then we get the required inequality. 
\end{proof}
\begin{proof}[Proof of Theorem~\ref{theorem:SM}]
Suppose $\mu\vdash m$ and $\lambda\vdash m+n$, and $\lambda$ refines $n$.
We need to determine when the sum
\begin{equation}
    \label{eq:sum}
    \sum_{\xi\vdash|\mu|} z_{\xi}^{-1} w_{\xi \mu} w_{(\xi\cup (n)) \lambda}
\end{equation}
is equal to $1$.

\subsubsection*{Case 1: $\mu$ is non-rectangular}
By Lemma~\ref{lemma:SM}, the term $\tfrac {1}{m}w_{(m)\mu} w_{(m,n)\lambda}$ in \eqref{eq:sum} corresponding to $\xi=(m)$ is greater than or equal to $1$.
If $\lambda\neq (1^{m+n})$, then $w_{(m,n)\lambda}>1$ by Lemma~\ref{lemma:SM}.
Otherwise the term in \eqref{eq:sum} corresponding to $\xi=\mu$ is positive.
In either case, the sum \eqref{eq:sum} is greater than $1.$
\subsubsection*{Case 2: $\mu=(1^m)$, $\lambda=(1^{m+n})$.} In this case, the sum~\eqref{eq:sum} becomes $\sum_{\xi\vdash m} z_{\xi}^{-1}$, which equals $1$, since $z_\xi^{-1}$ is the probability that a permutation in $S_m$ has cycle type $\xi$.
    \subsubsection*{Case 3: $\mu=(1^m)$ and $\lambda\neq (1^{m+n})$.} First, consider the case when $\lambda$ admits a non-rectangular sub-partition $\tilde\lambda$ of $m$ (here a sub-partition $\tilde\lambda$ of $\lambda$ by we mean a partition obtained from $\lambda$ by deleting some of its parts). Note that the term in ~\eqref{eq:sum} corresponding to $\xi=(m)$ is greater than or equal to $1$. 
    The term corresponding to $\xi=\tilde\lambda$ in~\eqref{eq:sum} is positive, so the sum exceeds $1$.
    Now assume that $\lambda$ admits a rectangular sub-partition $(a^b)$ of $m$, i.e., $\lambda=(a^b)\cup\eta$, where $\eta\vdash n$. We use the following identity:
        \begin{equation}
        \begin{aligned}
        \label{eq:cal-SM}
            \sum_{\xi\vdash|\mu|} z_{\xi}^{-1} w_{(\xi\cup (n))((a^b)\cup \eta)} &\geq  w_{(n)\eta} \sum_{\xi\vdash|\mu|} z_{\xi}^{-1} w_{\xi(a^b)}  \\
           & = w_{(n)\eta} \sum_{\gamma\vdash b} \dfrac{1}{\prod_{i\geq1}(ai)^{m_i}m_i!}a^{l(\gamma)}\\
           &= w_{(n)\eta} \sum_{\gamma\vdash b} \dfrac{1}{\prod_{i\geq1}i^{m_i}m_i!}\\
           &= w_{(n)\eta}
           \end{aligned}
        \end{equation}
        Note that we have used Lemma~\ref{lemma:SM} in the first inequality of the above calculation. 
        If $\eta\neq(1^n)$, then the sum~\eqref{eq:sum} is greater than $1$ by Lemma~\ref{lemma:SM}. Otherwise $\lambda=(a^b)\cup(1^n)$. Also, $a>n$ otherwise one can decompose $\lambda=\tilde\lambda\cup\eta$ where $\eta\vdash n$ and $\eta\neq(1^n)$. Thus we boil down to one of the above two cases, which results in the sum~\eqref{eq:sum} being greater than one.
        Finally, $\lambda=(a^b)\cup(1^n)$ with $a>1$. In this case, sum~\eqref{eq:sum} is 1. Because in the above calculation, the first inequality becomes equality due to $w_{(\xi\cup (n))((a^b)\cup (1^n) ) }=w_{\xi (a^b)}$.
        \subsubsection*{Case 4: $\mu=(a^b)$, with $a>1$, and $\lambda=(1^{m+n})$.}
         In this case, the sum~\eqref{eq:sum} becomes,   
        \begin{equation}
            \begin{aligned}
            \sum_{\xi\vdash|\mu|} z_{\xi}^{-1} w_{\xi\cup(a^b)}
            & =\sum_{\gamma\vdash b} \dfrac{1}{\prod_{i\geq1}(ai)^{m_i}m_i!}a^{l(\gamma)}\\
            &= \sum_{\gamma\vdash b} \dfrac{1}{\prod_{i\geq1}i^{m_i}m_i!}
            &= 1
            \end{aligned}
        \end{equation}
        \subsubsection*{Case 5: Finally, $\mu=(a^b)$ with $a>1$ and $\lambda\neq(1^{m+n})$.} Then $w_{(m)\mu}=a$. We use a case-by-case analysis:
            \begin{itemize}
                \item $\lambda$ has a non-rectangular sub-partition of $m$. Then the term $\tfrac{1}{m} a  w_{((m)\cup(n))\lambda} $ corresponding to $\xi=(m)$ in the sum \eqref{eq:sum} is greater than $1$ by Lemma~\ref{lemma:SM} and $a>1$.
                \item Suppose $\lambda$ only has rectangular sub-partitions of $m$.
                Let $(c^d)$ be a rectangular sub-partition of $m$. Let us write $\lambda=(c^d)\cup\eta$ where $\eta\vdash n$.
                Then we have 
                \begin{equation}
                \begin{aligned} \label{eq:finsum}
                    \sum_{\xi\vdash|\mu|} z_{\xi}^{-1} w_{\xi(a^b)} w_{(\xi\cup (n))((c^d)\cup \eta)} &\geq  w_{(n)\eta} \sum_{\xi\vdash|\mu|} z_{\xi}^{-1} w_{\xi(a^b)} w_{\xi(c^d)}  \\
           & = w_{(n)\eta} \sum_{\gamma\vdash g} \dfrac{1}{\prod_{i\geq1}(qi)^{m_i}m_i!}a^{l(\gamma)} c^{l(\gamma)}\\
           &= w_{(n)\eta} \sum_{\gamma\vdash g} \dfrac{1}{\prod_{i\geq1}(i)^{m_i}m_i!}\dfrac{a^{l(\gamma)} c^{l(\gamma)}}{q^{l(\gamma)}}\\
           &\geq \dfrac{ac}{q}w_{(n)\eta},
                \end{aligned}
                \end{equation}
                where $q=\mathrm{lcm}(a,c)$ and $g=\tfrac{n}{q}$. If $\eta\neq (1^n)$ or $ac=q$, then the sum \eqref{eq:sum} is greater than one by Lemma~\ref{lemma:SM} and the fact that $ac\geq q$.
                
            \item $\lambda=(c^d,1^n)$ with $\gcd(a,c)=1$.
            If $c\leq n$, then one can write $\lambda=\tilde\lambda\cup\eta$ where $\eta\vdash n$ and $\eta\neq(1^n)$.
                Thus the sum \eqref{eq:sum} is greater than one by the first two arguments of this case.
                Finally, we have $\lambda=(c^d)\cup(1^n)$ with $\ gcd(a,c)=1$ and $c>n$. In this case, the sum \eqref{eq:sum} is $1$, because all the inequalities in \eqref{eq:finsum} become equal.
            \end{itemize}
            This completes the proof.
\end{proof}
\subsection{Skew complete and skew elementary symmetric functions}
\begin{theorem}
\label{theorem:H}
Let $\{\lambda^n/\mu^n\}$ be a graded skew partition sequence and for each $n\geq0$, $u_n=h_{\lambda^n/\mu^n}$. Then the set $\{u_n\}_{n\geq0}$ is algebraically independent and generates $\Lambda_{\mathbb F}$ if and only if, for each $n\geq 1$, $\lambda_1^n\geq n$.
\begin{proof}
By Lemmas~\ref{lemma:gen1} and~\ref{lemma:gen2}, it suffices to show that $\langle h_{\lambda^n/\mu^n},p_n \rangle \neq 0$ if and only if $\lambda_1^n \geq n$.
By the definition of the adjoint of the multiplication operator, we have 
    \begin{align*}
        \langle h_{\lambda/\mu},p_n \rangle = \langle h_{\lambda},h_\mu p_n \rangle = \langle p_{n}^\perp (h_\lambda),h_\mu \rangle.
    \end{align*}
    Since $p_{n}^\perp = \sum_{j \geq 0} h_j \dfrac{\partial}{\partial h_{n+j}}$ (see \cite[Ex.~I.5.3]{MR3443860}), $p_{n}^\perp h_\lambda =0$ unless $\lambda$ contains a part which is greater than or equal to $n$.
If $\lambda$ contains a part that is greater than or equal to $n$, then $p_{n}^\perp h_\lambda$ is nonzero and it is a non-negative integer linear combination of complete symmetric functions.

If $\lambda$ and $\mu$ are partitions of $n$, then $\langle h_\lambda,h_\mu \rangle >0$, since $h_\eta=\sum_{\nu\vdash n} K_{\nu\eta}s_\nu$.
Here $K_{\nu\eta}$ is the Kostka number.
Note that $K_{(n)\eta}=1$.
Hence $\langle h_\lambda,h_\mu \rangle \geq \langle s_{(n)},s_{(n)}\rangle=1$.

From the above two observations, $\langle h_{\lambda/\mu},p_n \rangle \neq 0$ if and only if $\lambda$ contains a part  $ \geq n$. This completes the proof of the lemma.
    \end{proof} 
\end{theorem}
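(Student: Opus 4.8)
The plan is to apply Lemmas~\ref{lemma:gen1} and~\ref{lemma:gen2}, which together reduce the problem to showing that $\langle h_{\lambda^n/\mu^n}, p_n\rangle \neq 0$ if and only if $\lambda_1^n \geq n$ (equivalently, $\lambda^n$ has a part which is at least $n$; since $\lambda^n_1$ is the largest part, this is the same condition). Writing $\lambda = \lambda^n$, $\mu = \mu^n$ to lighten notation, I would unfold the skew inner product using the adjointness of multiplication by $p_n$ with respect to the Hall inner product:
\[
\langle h_{\lambda/\mu}, p_n\rangle = \langle h_\lambda, h_\mu p_n\rangle = \langle p_n^\perp h_\lambda, h_\mu\rangle.
\]

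Next I would invoke the explicit formula for the skewing operator $p_n^\perp = \sum_{j\geq 0} h_j\,\partial/\partial h_{n+j}$ (Macdonald, Ex.~I.5.3), acting on the polynomial ring $\mathbb{F}[h_1,h_2,\dotsc]$. The key structural observation is that $p_n^\perp h_\lambda$ vanishes unless some part of $\lambda$ is $\geq n$ — because each differentiation $\partial/\partial h_{n+j}$ requires an $h$-factor of index at least $n$ to be present — and that when such a part exists, $p_n^\perp h_\lambda$ is a nonzero $\mathbb{Z}_{\geq 0}$-linear combination of products $h_\nu$ (it is manifestly a sum of monomials in the $h_i$ with nonnegative integer coefficients, and at least one term survives, e.g.\ replacing the largest part $\lambda_1$ by $\lambda_1 - n$ and multiplying by $h_0 = 1$). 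So $p_n^\perp h_\lambda$ is a nonnegative combination $\sum_\nu c_\nu h_\nu$ with the $\nu$ ranging over partitions of $n$ and not all $c_\nu$ zero.

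It then remains to check that pairing this nonnegative combination against $h_\mu$ gives something strictly positive. For this I would use that $\langle h_\alpha, h_\beta\rangle > 0$ for any partitions $\alpha,\beta$ of the same size: expanding $h_\beta = \sum_{\rho} K_{\rho\beta} s_\rho$ in the Schur basis, which is orthonormal, gives $\langle h_\alpha, h_\beta\rangle = \sum_\rho K_{\rho\alpha} K_{\rho\beta} \geq K_{(n)\alpha} K_{(n)\beta} = 1$, since the Kostka number $K_{(n)\beta} = 1$ for every $\beta \vdash n$. Hence every term $c_\nu \langle h_\nu, h_\mu\rangle$ is nonnegative and at least one is strictly positive, so the whole inner product is nonzero. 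Conversely, if no part of $\lambda$ is $\geq n$, then $p_n^\perp h_\lambda = 0$ and the inner product is $0$. The main obstacle — really the only subtle point — is making the claim "$p_n^\perp h_\lambda$ is a nonzero nonnegative $h$-combination whenever $\lambda$ has a part $\geq n$" precise; this follows cleanly from the closed form of $p_n^\perp$ together with the fact that distinct monomials in the $h_i$ correspond to distinct partitions, so there is no cancellation. Everything else is routine bookkeeping with the Hall pairing.
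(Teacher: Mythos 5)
Your proposal is correct and follows essentially the same route as the paper: reduce via Lemmas~\ref{lemma:gen1} and~\ref{lemma:gen2}, apply adjointness and the formula $p_n^\perp = \sum_{j\geq0} h_j\,\partial/\partial h_{n+j}$, and conclude by positivity of $\langle h_\alpha,h_\beta\rangle$ via Kostka numbers. The only nitpick is that the partitions $\nu$ appearing in $p_n^\perp h_\lambda$ have size $|\mu|$, not $n$, but this does not affect the argument.
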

We shall now look at the $\mathbb Z$-generators from the skew complete symmetric functions.
\begin{theorem}
    \label{theorem:SH}
Let $\{\lambda^n/\mu^n\}$ be a graded skew partition sequence and for each $n\geq0$, $u_n=h_{\lambda^n/\mu^n}$. Then the set $\{u_n\}_{n\geq0}$ is algebraically independent and generates $\Lambda_{\mathbb{Z}}$ if and only if, for each $n\geq 1$, $\lambda_1^n\geq n$ and $(\lambda^n,\mu^n)$ satisfies one of the following conditions:
\begin{enumerate}
    \item $\mu^n=(|\lambda^n|-n)$ and $\lambda_2^n<n$,
    \item $\lambda^n=(n,m)$ with $m<n$ and $\mu^n$ can be any partition of $|\lambda^n|-n$, 
    \item $\lambda^n=(n+m)$ and $\mu^n$ can be any partition of $|\lambda^n|-n$.
\end{enumerate}
\end{theorem}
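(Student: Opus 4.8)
The plan is to apply Lemmas~\ref{lemma:gen1} and~\ref{lemma:gen2}, which reduce the statement to showing that, for every $n\geq1$, the integer $\langle h_{\lambda^n/\mu^n},p_n\rangle$ is a unit in $\mathbb Z$ if and only if $(\lambda^n,\mu^n)$ has one of the three listed forms. Writing $\lambda$ for $\lambda^n$ and $\mu$ for $\mu^n$, I would first reuse the computation from the proof of Theorem~\ref{theorem:H}: since $p_n^\perp=\sum_{j\geq0}h_j\,\partial/\partial h_{n+j}$ is a derivation of $\Lambda$ that sends $h_a$ to $h_{a-n}$ (with $h_0=1$) when $a\geq n$ and to $0$ when $a<n$, the Leibniz rule gives
\begin{equation*}
\langle h_{\lambda/\mu},p_n\rangle=\langle p_n^\perp h_\lambda,\,h_\mu\rangle=\sum_{i\,:\,\lambda_i\geq n}\big\langle h_{(\lambda\setminus\lambda_i)\cup(\lambda_i-n)},\,h_\mu\big\rangle,
\end{equation*}
where $\lambda\setminus\lambda_i$ denotes $\lambda$ with one part equal to $\lambda_i$ deleted, and the sum runs over parts counted with multiplicity.

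The second ingredient is a sharpening of the inequality $\langle h_\alpha,h_\beta\rangle\geq1$ already noted in the proof of Theorem~\ref{theorem:H}: for partitions $\alpha,\beta\vdash m$ with $m\geq1$, one has $\langle h_\alpha,h_\beta\rangle=1$ if and only if $\alpha=(m)$ or $\beta=(m)$. I would prove this from $\langle h_\alpha,h_\beta\rangle=\sum_\nu K_{\nu\alpha}K_{\nu\beta}$: the term $\nu=(m)$ contributes $1$ since $K_{(m)\gamma}=1$, giving the lower bound; and for $m\geq2$ the term $\nu=(m-1,1)$ contributes $(l(\alpha)-1)(l(\beta)-1)$, because an SSYT of shape $(m-1,1)$ and content $\gamma$ is determined by the entry $>1$ placed in its second row, which may be chosen freely among the $l(\gamma)-1$ parts of $\gamma$ past the first. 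This product vanishes precisely when one of $\alpha,\beta$ has a single part.

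The remainder is bookkeeping. Each summand in the displayed formula is a positive integer (the arguments $\alpha=(\lambda\setminus\lambda_i)\cup(\lambda_i-n)$ and $\mu$ are both partitions of $m:=|\lambda|-n$), so $\langle h_{\lambda/\mu},p_n\rangle$ is a nonnegative integer, hence a unit in $\mathbb Z$ iff it equals $1$. That forces the sum to have exactly one term, i.e.\ $\lambda_1\geq n$ and $\lambda_2<n$, and it forces that surviving term $\langle h_\alpha,h_\mu\rangle$, with $\alpha=(\lambda\setminus\lambda_1)\cup(\lambda_1-n)$, to equal $1$. By the lemma the latter means $\mu=(m)$ or $\alpha=(m)$. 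The case $\mu=(|\lambda|-n)$ together with $\lambda_2<n$ is exactly condition~(1). When $\alpha=(m)$, distinguish two subcases: if $\lambda_1>n$ then $\alpha$ contains the positive part $\lambda_1-n$, so $\alpha=(m)$ forces $\lambda$ to have no further parts, i.e.\ $\lambda=(n+m)$, condition~(3); if $\lambda_1=n$ then $\alpha=(\lambda_2,\lambda_3,\dotsc)$, so $\alpha=(m)$ forces $\lambda=(n,m)$ with $m=\lambda_2<n$, condition~(2). Conversely, each of (1)--(3) plainly yields exactly one part $\geq n$ and makes the surviving term $\langle h_\alpha,h_\mu\rangle$ equal $1$, so the inner product is $1$.

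I expect the only delicate points to be the equality case of the Kostka computation in the second paragraph and, when translating "$\alpha=(m)$" back into a statement about $(\lambda,\mu)$, keeping the subcases $\lambda_1>n$ and $\lambda_1=n$ straight (together with the degenerate case $m=0$, where all three conditions reduce to $\lambda=(n)$, $\mu=\emptyset$). Everything else is formal, since the derivation identity for $p_n^\perp$ that carries the argument is already available from the proof of Theorem~\ref{theorem:H}.
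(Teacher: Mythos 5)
Your proposal is correct and follows essentially the same route as the paper: reduce via Lemmas~\ref{lemma:gen1} and~\ref{lemma:gen2} to deciding when $\langle h_{\lambda/\mu},p_n\rangle=1$, expand $p_n^\perp h_\lambda$ to see that two parts $\geq n$ (with multiplicity) already force the value past $1$, and settle the single surviving term by the criterion $\langle h_\alpha,h_\beta\rangle=1$ iff $\alpha=(m)$ or $\beta=(m)$. Your explicit count $K_{(m-1,1)\gamma}=l(\gamma)-1$ is just a sharper form of the paper's bound $\langle h_{\bar\lambda},h_\mu\rangle\geq\langle s_{(m)}+s_{(m-1,1)},\,s_{(m)}+s_{(m-1,1)}\rangle=2$, so the two arguments coincide in substance.
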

\begin{proof}
Let $\lambda\vdash m+n$, $\mu\vdash m$. We need to determine the pairs $(\lambda,\mu)$ such that $\langle h_{\lambda/\mu},p_n \rangle=1$. We can assume that $\lambda_1\geq n$ by Theorem~\ref{theorem:H}.
We have,
     \begin{align*}
       J= \langle h_{\lambda/\mu},p_n \rangle = \sum_{j \geq 0}  \left\langle  h_j \dfrac{\partial}{\partial h_{n+j}} h_\lambda, h_\mu \right\rangle.
    \end{align*}
Observe that for any two partitions $\lambda, \mu$ of $n$, $\langle h_\lambda,h_\mu\rangle$ is always a positive integer. If $\lambda$ has two parts greater than $n$, then $J>1$. Because if those two parts are distinct, then $\sum_{j\geq0}h_j \dfrac{\partial}{\partial h_{n+j}} h_\lambda \geq h_\eta+h_\gamma$ for some partitions $\eta, \gamma$ of $n$. If those two parts are equal (say, to $n+g$) then $h_g \dfrac{\partial}{\partial h_{n+g}} h_\lambda \geq 2 h_gh_{\tilde\lambda}$, where $\tilde\lambda$ is obtained from $\lambda$ by deleting a part equal to $n+g$. In either case, we have $J>1$. Now let us consider the case when $\lambda$ has exactly one part greater than $n$ and let $\sum_{j\geq0}h_j \dfrac{\partial}{\partial h_{n+j}} h_\lambda=h_{\bar\lambda}$. Note that $\langle h_\lambda,h_\mu\rangle >1$ if $\bar\lambda\neq (m)$ and $\mu \neq (m)$. Indeed,
\begin{align*}
    \langle h_{\bar\lambda},h_\mu\rangle &\geq \langle s_{(m)}+s_{(m-1,1)},s_{(m)}+s_{(m-1,1)}\rangle\\
    & = \langle s_{(m)},s_{(m)}\rangle+\langle s_{(m-1,1)},s_{(m-1,1)}\rangle=1+1=2.
\end{align*}   
Finally, we will consider the remaining possible pairs $(\lambda,\mu)$ in the following cases.
\begin{itemize}
    \item $\bar\lambda\neq(m)$ and $\mu=(m).$ In this case, $J=1$. Here $\lambda$ is any partition of $m+n$ such that exactly one part of $\lambda$ is greater than or equal to $n$, i.e., $\lambda_1^n\geq n$ and $\lambda_2^n<n$.
    \item $\bar\lambda=(m)$ and $\mu\neq (m).$ In this case also $J=1$. The only partitions $\lambda$ of $m+n$ with exactly one part is greater than or equal to $n$ such that $\bar\lambda=(m+n)$ are $(n,m)$ with $m<n$ and $(m+n)$.
\end{itemize}
This completes the proof.
\end{proof}

The above theorem holds for the skew elementary symmetric functions as well.
\begin{theorem}
\label{theorem:E}
	Let $\{\lambda^n/\mu^n\}$ be a graded skew partition sequence and for each $n\geq0$, $u_n=e_{\lambda^n/\mu^n}$. Then the set $\{u_n\}_{n\geq0}$ is algebraically independent and generates $\Lambda_{\mathbb F}$ if and only if, for each $n\geq 1$, $\lambda_1^n\geq n$.
\end{theorem}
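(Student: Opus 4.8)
The plan is to deduce Theorem~\ref{theorem:E} from Theorem~\ref{theorem:H} by applying the standard involution $\omega\colon\Lambda_{\mathbb F}\to\Lambda_{\mathbb F}$ that interchanges $h_n$ and $e_n$. Recall that $\omega$ is a ring automorphism with $\omega(h_\lambda)=e_\lambda$ and $\omega(e_\lambda)=h_\lambda$ for every partition $\lambda$, that $\omega(p_n)=(-1)^{n-1}p_n$, and, crucially, that $\omega$ is an isometry for the Hall inner product, i.e. $\langle\omega f,\omega g\rangle=\langle f,g\rangle$ for all $f,g\in\Lambda_{\mathbb F}$. By Lemmas~\ref{lemma:gen1} and~\ref{lemma:gen2}, it suffices to show that $\langle e_{\lambda^n/\mu^n},p_n\rangle\neq 0$ for every $n\geq 1$ if and only if $\lambda_1^n\geq n$.

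The one computation to carry out is the following. Writing $\lambda=\lambda^n$ and $\mu=\mu^n$, and using the defining adjointness relation $\langle e_{\lambda/\mu},f\rangle=\langle e_\lambda,e_\mu f\rangle$ together with the isometry property of $\omega$,
\begin{align*}
\langle e_{\lambda/\mu},p_n\rangle
&=\langle e_\lambda,e_\mu p_n\rangle
=\langle\omega(e_\lambda),\omega(e_\mu)\,\omega(p_n)\rangle\\
&=\langle h_\lambda,h_\mu\,(-1)^{n-1}p_n\rangle
=(-1)^{n-1}\langle h_{\lambda/\mu},p_n\rangle.
\end{align*}
Thus $\langle e_{\lambda^n/\mu^n},p_n\rangle$ and $\langle h_{\lambda^n/\mu^n},p_n\rangle$ vanish together, and the criterion $\lambda_1^n\geq n$ follows directly from the analysis already carried out in the proof of Theorem~\ref{theorem:H}, where it was shown that $\langle h_{\lambda/\mu},p_n\rangle\neq 0$ exactly when $\lambda$ has a part $\geq n$.

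I do not anticipate a genuine obstacle: the only points that need care are recalling that $\omega$ is indeed an isometry for the Hall inner product (a standard fact, see \cite{MR3443860}) and checking that the skew elementary functions in the statement are defined by exactly the adjointness relation used above, so that pushing $\omega$ through the inner product is legitimate. If one preferred a proof not invoking $\omega$, one could instead compute the adjoint of multiplication by $e_n$ directly, namely $e_n^\perp=\sum_{j\geq 0}e_j\,\partial/\partial e_{n+j}$ acting on $\mathbb F[e_1,e_2,\dots]$, and then repeat the argument of Theorem~\ref{theorem:H} verbatim with $h$ replaced by $e$; but the $\omega$-route is shorter and makes the sign $(-1)^{n-1}$ transparent.
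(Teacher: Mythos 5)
Your proof is correct and follows essentially the same route as the paper: both deduce the result from Theorem~\ref{theorem:H} via the involution $\omega$, using that it is an isometry for the Hall inner product and sends $h_{\lambda/\mu}$ to $e_{\lambda/\mu}$. Your version merely makes explicit the sign $(-1)^{n-1}$ coming from $\omega(p_n)$, which the paper leaves implicit.
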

\begin{proof}
Recall that $\Lambda_{\mathbb F}$ admits an algebra involution $\omega$ for which $\omega(h_n)=e_n$ for all $n\geq 0$, which preserves the Hall inner product.
So $\omega(h_{\lambda/\mu}) = e_{\lambda/\mu}$, and the theorem follows from Theorem~\ref{theorem:H}.
\end{proof}
The following theorem can be easily proved by using $\omega$ and Theorem~\ref{theorem:SH}.
\begin{theorem}
    \label{theorem:SE}
Let $\{\lambda^n/\mu^n\}$ be a graded skew partition sequence and for each $n\geq0$, $u_n=e_{\lambda^n/\mu^n}$. Then the set $\{u_n\}_{n\geq0}$ is algebraically independent and generates $\Lambda_{\mathbb{Z}}$ if and only if, for each $n\geq 1$, $\lambda_1^n\geq n$ and $(\lambda^n,\mu^n)$ satisfies one of the following conditions:
\begin{enumerate}
    \item $\mu^n=(|\lambda^n|-n)$ and $\lambda_2^n<n$,
    \item $\lambda^n=(n,m)$ with $m<n$ and $\mu^n$ can be any partition of $|\lambda^n|-n$, 
    \item $\lambda^n=(n+m)$ and $\mu^n$ can be any partition of $|\lambda^n|-n$.
\end{enumerate}
\end{theorem}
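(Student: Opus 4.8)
The plan is to deduce Theorem~\ref{theorem:SE} from Theorem~\ref{theorem:SH} by transporting the whole problem across the involution $\omega$, exactly as Theorem~\ref{theorem:E} was deduced from Theorem~\ref{theorem:H}. First I would recall the two structural facts about $\omega$ that make this work: it is an algebra automorphism of $\Lambda_{\mathbb Z}$ (so it sends a system of algebraically independent $\mathbb Z$-algebra generators to another such system, and conversely), and it is an isometry for the Hall inner product with $\omega(h_n)=e_n$ for all $n\ge 0$. The only genuinely new point compared to the $\mathbb F$-case is that $\omega$ must be checked to be defined over $\mathbb Z$ and to be a $\mathbb Z$-algebra involution, not merely a $\mathbb Q$-linear one; this is standard (e.g.\ $\omega(e_n)=h_n$ and both $\{e_n\}$ and $\{h_n\}$ are $\mathbb Z$-algebra generators), so I would state it in one sentence and cite \cite{MR3443860}.

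Next I would establish the key identity $\omega(h_{\lambda/\mu}) = e_{\lambda/\mu}$ for all skew partitions $\lambda/\mu$. This follows from the definition of the skew families via the Hall inner product: for any $f\in\Lambda_{\mathbb F}$,
\begin{displaymath}
    \langle \omega(h_{\lambda/\mu}), f\rangle = \langle h_{\lambda/\mu}, \omega(f)\rangle = \langle h_\lambda, h_\mu\,\omega(f)\rangle = \langle \omega(h_\lambda), \omega(h_\mu)\,f\rangle = \langle e_\lambda, e_\mu\,f\rangle = \langle e_{\lambda/\mu}, f\rangle,
\end{displaymath}
using that $\omega$ is an involution and an isometry and that it is multiplicative. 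Since the Hall inner product is nondegenerate, $\omega(h_{\lambda/\mu}) = e_{\lambda/\mu}$. (This is the same computation already invoked in the proof of Theorem~\ref{theorem:E}, so I would simply refer back to it.)

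With these in hand the theorem is immediate. Given a graded skew partition sequence $\{\lambda^n/\mu^n\}$, set $u_n = e_{\lambda^n/\mu^n} = \omega(h_{\lambda^n/\mu^n})$. Since $\omega$ is a $\mathbb Z$-algebra automorphism of $\Lambda_{\mathbb Z}$, the set $\{u_n\}_{n\ge 0}$ is algebraically independent and generates $\Lambda_{\mathbb Z}$ if and only if $\{h_{\lambda^n/\mu^n}\}_{n\ge 0}$ is algebraically independent and generates $\Lambda_{\mathbb Z}$; here I should note that $h_{\lambda^n/\mu^n}$ and hence $u_n$ is homogeneous of degree $n$, so the hypotheses of Lemmas~\ref{lemma:gen1} and~\ref{lemma:gen2} and of Theorem~\ref{theorem:SH} apply verbatim. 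By Theorem~\ref{theorem:SH}, the latter holds precisely when, for each $n\ge 1$, $\lambda_1^n\ge n$ and $(\lambda^n,\mu^n)$ satisfies one of the three listed conditions — and those conditions depend only on $\lambda^n$ and $\mu^n$, not on which family we use, so they are exactly the conditions in the statement to be proved. This completes the proof.

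I do not expect any real obstacle: the entire content is the observation that $\omega$ is defined over $\mathbb Z$, which is classical. The only place to be slightly careful is to confirm that $\omega$ being a $\mathbb Q$-algebra isometry automatically restricts to a $\mathbb Z$-algebra automorphism of $\Lambda_{\mathbb Z}$ — this is because $\omega$ permutes the $\mathbb Z$-basis of Schur functions (in fact $\omega(s_\lambda)=s_{\lambda'}$), so it is integral with integral inverse; alternatively one notes $\omega(e_\lambda)=h_\lambda$ and both $\{e_\lambda\}$ and $\{h_\lambda\}$ are $\mathbb Z$-bases of $\Lambda_{\mathbb Z}$. Everything else is a direct transcription of the argument for Theorem~\ref{theorem:E}.
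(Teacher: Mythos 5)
Your proposal is correct and is exactly the paper's intended argument: the paper proves Theorem~\ref{theorem:SE} by applying the involution $\omega$ (which satisfies $\omega(h_{\lambda/\mu})=e_{\lambda/\mu}$ and is a $\mathbb Z$-algebra automorphism of $\Lambda_{\mathbb Z}$) to transport Theorem~\ref{theorem:SH}, just as Theorem~\ref{theorem:E} is deduced from Theorem~\ref{theorem:H}. Your write-up merely fills in the (standard) details that the paper leaves implicit.
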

\subsection{Schur functions} Now we shall consider the most important family of symmetric functions, namely the Schur functions.
For the definition of terms such as Young diagram, hook, ribbon, etc., we refer the reader to \cite[Ch.~I]{MR3443860}.
%\begin{definition}
%A partition $\lambda$ is said to be a \emph{hook} if $\lambda_2=1.$    
%We say that a partition $\mu$ is contained in $\lambda$ if $\mu_i\leq \lambda_i$ for all $i\geq1$. In which case $\lambda/\mu$ is defined to be the set of cells in the Young diagram of $\lambda$ which are not in the Young diagram of $\mu$. We also say that $\lambda/\mu$ is a ribbon if $\mu\subset\lambda$ and $\lambda/\mu$ does not contain any $2\times2$ square and we can move from any cell of $\lambda/\mu$ to any other cell of $\lambda/\mu$ using the sides of the square while staying inside $\lambda/\mu.$
%\end{definition}
\begin{theorem}
\label{theorem:theSchur}
Let $\{\lambda^n\}$ be a graded partition sequence and for each $n\geq0$, $u_n=s_{\lambda^n}$. Then the set $\{u_n\}_{n\geq0}$ is algebraically independent and generates $\Lambda_\mathbb{F}$ if and only if, for each $n\geq0$, $\lambda^n $  is a hook. 
\end{theorem}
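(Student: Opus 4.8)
The plan is to apply Lemmas~\ref{lemma:gen1} and~\ref{lemma:gen2}, which reduce the claim to showing that $\langle s_{\lambda^n}, p_n \rangle \neq 0$ if and only if $\lambda^n$ is a hook. Indeed, since $\mathbb{F}$ is a field, every nonzero scalar is a unit, so by Lemma~\ref{lemma:gen2} the sequence $\{s_{\lambda^n}\}$ generates $\Lambda_{\mathbb F}$ exactly when $\langle s_{\lambda^n},p_n\rangle\neq0$ for every $n\geq0$, and then Lemma~\ref{lemma:gen1} supplies algebraic independence automatically. The case $n=0$ is vacuous: the only partition of $0$ is $\emptyset$, which is a hook, and $s_\emptyset = u_0 = 1$.

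It then remains to evaluate $\langle s_\lambda, p_n \rangle$ for $\lambda \vdash n$. First I would use the self-duality of the Schur basis under the Hall inner product to identify $\langle s_\lambda, p_n \rangle$ with the coefficient of $s_\lambda$ in the Schur expansion of $p_n$. By the Murnaghan--Nakayama rule this coefficient is the signed count $\sum_T (-1)^{\operatorname{ht}(T)}$ over all border-strip tableaux $T$ of shape $\lambda$ and content $(n)$; equivalently, one may simply invoke the classical identity $p_n = \sum_{r=0}^{n-1}(-1)^r s_{(n-r,1^r)}$. A border-strip tableau of content $(n)$ is a single border strip filling the entire diagram of $\lambda$, so the sum is empty unless the Young diagram of $\lambda$ is itself a border strip, i.e. contains no $2\times2$ block of cells; for a partition this is equivalent to $\lambda_2\leq1$, that is, $\lambda = (\lambda_1, 1^{\,n-\lambda_1})$ is a hook. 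Conversely, every hook diagram is a single connected border strip and can be filled in exactly one way, of height $n-\lambda_1$, so $\langle s_\lambda,p_n\rangle = (-1)^{\,n-\lambda_1} = \pm1 \neq 0$.

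Putting these together yields $\langle s_{\lambda^n},p_n\rangle\neq0$ precisely when $\lambda^n$ is a hook, and the theorem follows. The one point worth flagging is that, in contrast to a general $p_\mu$, no cancellation can occur in the Murnaghan--Nakayama sum here: the border strip must cover all $n$ cells of $\lambda$, and it can do so in at most one way, so a hook always contributes a single sign $\pm1$ rather than a sum of competing terms. I do not anticipate a genuine obstacle; the entire content of the argument is the standard combinatorial description of $\langle s_\lambda, p_n\rangle$ together with the observation that a partition shape is a border strip exactly when it is a hook.
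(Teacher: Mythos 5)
Your proposal is correct and follows essentially the same route as the paper: reduce via Lemmas~\ref{lemma:gen1} and~\ref{lemma:gen2} to the nonvanishing of $\langle s_{\lambda^n},p_n\rangle$, then invoke the classical expansion $p_n=\sum_{r=0}^{n-1}(-1)^r s_{(n-r,1^r)}$ (the paper cites \cite[Ex.~I.3.11, Eq.~(2)]{MR3443860} with $\mu=\emptyset$) to conclude that this inner product is $(-1)^{n-\lambda_1}$ for hooks and $0$ otherwise. Your additional Murnaghan--Nakayama justification of why no cancellation occurs is a fine supplement but not needed beyond the identity itself.
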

\begin{proof}
Taking $\mu$ to be the empty partition in \cite[Ex.~I.3.11, Eq.~(2)]{MR3443860}, we have
\begin{displaymath}
    p_n = \sum_\lambda (-1)^{n-\lambda_1}s_\lambda
\end{displaymath}
where the sum varies over all hook partitions $\lambda$ of $n$.
We get
$$\langle s_{\lambda},p_n\rangle = \begin{cases}
    (-1)^{n-\lambda_1} &\text{if }\lambda\text{ is a hook,}\\
    0 &\text{otherwise.}
\end{cases}$$
Now the theorem follows from Lemmas~\ref{lemma:gen1} and~\ref{lemma:gen2}.
\end{proof}
\begin{remark}
    Since $\langle s_\lambda,p_n\rangle = \pm 1$ for every hook $\lambda$ of size $n$, Theorem~\ref{theorem:theSchur} continues to hold when $\mathbb F$ is replaced by any unital commutative ring $R$.
\end{remark}
\subsection{Skew Schur functions $s_{\lambda/\mu}$} 
\begin{theorem}\label{theorem:SkewSchur}
Let $\{\lambda^n/\mu^n\}$ be a graded skew partition sequence and for each $n\geq0$, $u_n=s_{\lambda^n/\mu^n}$. Then the set $\{u_n\}_{n\geq0}$ is algebraically independent and generates $\Lambda_{\mathbb F}$ if and only if, for each $n\geq0$, $\lambda^n/\mu^n$ is a ribbon. These also form a $\mathbb Z$-generating set for $\Lambda_\mathbb Z$.
\end{theorem}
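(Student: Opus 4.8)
The plan is to run the argument through Lemmas~\ref{lemma:gen1} and~\ref{lemma:gen2} exactly as in the previous theorems: it suffices to show that $\langle s_{\lambda^n/\mu^n},p_n\rangle$ is a unit in $\mathbb F$ (and, for the last assertion, in $\mathbb Z$) precisely when $\lambda^n/\mu^n$ is a ribbon, and is not a unit otherwise. Abbreviating $\lambda=\lambda^n$ and $\mu=\mu^n$, the definition of the skew Schur function as the adjoint of multiplication by $s_\mu$ gives
\[
\langle s_{\lambda/\mu},p_n\rangle = \langle s_\lambda,\, s_\mu\, p_n\rangle.
\]

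The key step is to expand $s_\mu p_n$ by the Murnaghan--Nakayama rule (the case of general $\mu$ in \cite[Ex.~I.3.11]{MR3443860}): $s_\mu p_n = \sum_{\rho}(-1)^{\mathrm{ht}(\rho/\mu)} s_\rho$, where $\rho$ ranges over all partitions with $\mu\subseteq\rho$ such that $\rho/\mu$ is a border strip of size $n$, and $\mathrm{ht}$ denotes the height (one less than the number of rows) of that border strip. Pairing with $s_\lambda$ and using that $\{s_\nu\}_{\nu\in\Par}$ is orthonormal for the Hall inner product yields
\[
\langle s_{\lambda/\mu},p_n\rangle =
\begin{cases}
(-1)^{\mathrm{ht}(\lambda/\mu)} & \text{if } \mu\subseteq\lambda \text{ and } \lambda/\mu \text{ is a ribbon of size } n,\\
0 & \text{otherwise.}
\end{cases}
\]
In particular this value is always $0$ or $\pm1$; it is $\pm1$ exactly when $\lambda/\mu$ is a ribbon, and $0$ otherwise (recall that under our non-standard convention a skew shape $\lambda/\mu$ with $\mu\not\subseteq\lambda$ is in particular not a ribbon, so that case is subsumed in the vanishing one).

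Finally, $\pm1$ is a unit in every commutative ring with $1$ while $0$ is never a unit, so Lemma~\ref{lemma:gen2} immediately gives that $\{s_{\lambda^n/\mu^n}\}_{n\geq0}$ generates $\Lambda_{R}$ for $R=\mathbb F$ (and for $R=\mathbb Z$) if and only if $\lambda^n/\mu^n$ is a ribbon for every $n\geq1$; algebraic independence over $\mathbb F$ then follows from Lemma~\ref{lemma:gen1}. I do not expect a genuine obstacle here: the whole content is the Murnaghan--Nakayama expansion together with the observation that its vanishing locus is exactly the complement of the ribbon condition. The only point requiring a moment's care is that our skew partitions need not satisfy $\mu\subseteq\lambda$; but since $\langle s_\lambda, s_\mu p_n\rangle$ is, via Littlewood--Richardson, automatically $0$ when $\mu\not\subseteq\lambda$, this introduces nothing new.
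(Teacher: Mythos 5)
Your proposal is correct and follows essentially the same route as the paper: both reduce the claim via Lemmas~\ref{lemma:gen1} and~\ref{lemma:gen2} to computing $\langle s_{\lambda/\mu},p_n\rangle$, which equals $(-1)^{\mathrm{ht}(\lambda/\mu)}$ when $\lambda/\mu$ is a ribbon of size $n$ and $0$ otherwise, by the Murnaghan--Nakayama rule (the paper simply cites \cite[Ex.~I.3.11]{MR3443860} for this). Your extra remark handling the case $\mu\not\subseteq\lambda$ is a welcome clarification given the paper's non-standard convention on skew partitions, but it does not change the argument.
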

\begin{proof}
We have 
$$\langle s_{\lambda/ \mu },p_n\rangle=\begin{cases} (-1)^{\mathrm{ht}(\lambda/ \mu )} & \text{if } \lambda/ \mu  \text{ is a ribbon of size }n,\\
0 & \text{otherwise,}
\end{cases}
$$
where $\mathrm{ht}(\lambda/ \mu )$ is the number of rows $\lambda/ \mu$ occupies minus one
(see \cite[chapter I, exercise 3.11]{MR3443860}).
Now the theorem follows from Lemmas~\ref{lemma:gen1} and ~\ref{lemma:gen2}.
\end{proof}

\section{Generators for the familiar bases over $\mathbb{F}(t)$}
\subsection{Hall-Littlewood symmetric functions and their specializations.}
The Hall-Littlewood symmetric function $P_\lambda(x;t)$ variables $x_1,\dotsc,x_n$ is defined by 
$$P_\lambda(\uline{x};t) = \dfrac{1}{v_\lambda(t)}\sum_{w\in S_n}w(x_1^{\lambda_1}x_2^{\lambda_2}\dotsb x_n^{\lambda_n}\prod_{1\leq i<j\leq n}\dfrac{x_i-tx_j}{x_i-x_j})$$
where $v_\lambda(t)=\prod\limits_{i\geq1}\prod\limits_{j=1}^{m_i}\dfrac{\phi_{m_i}(t)}{(1-t)^{m_i}}$, $m_i$ denotes the number of parts of $\lambda$ equal to $i$ , $\phi_r(t)=(1-t)(1-t^2)\dots(1-t^r)$ and $\uline{x}$ denotes $x_1,\dots,x_n$. It is well known that $P_\lambda(\uline{x};t)$ has the stability property which allows us to define the Hall-Littlewood symmetric function $P_\lambda(x;t)$ to be the inverse limit of $P_\lambda(\uline{x};t)$.

The $Q$-Hall-Littlewood symmetric functions are defined by $Q_\lambda(x;t)=(\prod\limits_{i\geq1}\phi_{m_i}(t)) P_\lambda(x;t)$

%From now on, we consider the symmetric functions over the field $F= \mathbb{C}(t)$.
From the definition of $P_\lambda(x;t)$, we see that $P_\lambda(x;t)$ specializes to Schur functions $s_\lambda(x)$ at $t=0$ and to monomial symmetric functions $m_\lambda(x)$ at $t=1$.
It is curious to know what happens for other values of $t$. It turns out that 
\begin{theorem} \label{theorem:hallp}
Let $\{\lambda^n\}$ be a graded partition sequence and for each $n\geq0$, $u_n=P_{\lambda^n}$. Then 
 the set $\{u_n(x;t)\}_{n\geq0}$ is a system of algebraically independent generators of $\Lambda_{\mathbb{F}(t)}$.
For $\xi\in \mathbb F$, the sequence $\{u_n(x;\xi)\}$ of specializations forms a system of algebraically independent and generates $\Lambda_{\mathbf F}$ if $\xi\neq 0$ and $\xi$ is not a non-trivial root of unity.
\end{theorem}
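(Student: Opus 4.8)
The plan is to reduce both assertions to Lemmas~\ref{lemma:gen1} and~\ref{lemma:gen2}: since $\mathbb F(t)$ and $\mathbb F$ are fields, $\{u_n\}$ is a system of algebraically independent generators of $\Lambda_{\mathbb F(t)}$ (resp.\ of $\Lambda_{\mathbb F}$) if and only if $\langle u_n,p_n\rangle\neq 0$ for every $n$. Fix $n\geq 1$, abbreviate $\lambda=\lambda^n$, and put $g_\lambda(t):=\langle P_\lambda(x;t),p_n\rangle$. Everything will follow once we know that $g_\lambda(t)$ is a nonzero polynomial all of whose complex roots are $0$ or roots of unity.

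First, $g_\lambda(t)\in\mathbb Z[t]$: writing $P_\lambda(x;t)=m_\lambda+\sum_{\mu<\lambda}u_{\lambda\mu}(t)\,m_\mu$ with $u_{\lambda\mu}(t)\in\mathbb Z[t]$, and recalling from \eqref{equation-sm:inn} that $\langle m_\mu,p_n\rangle=\epsilon_{(n)}\epsilon_\mu w_{(n)\mu}\in\mathbb Z$, this is immediate. And $g_\lambda(t)\neq 0$: specializing at $t=1$ and using $P_\lambda(x;1)=m_\lambda$ gives $g_\lambda(1)=\langle m_\lambda,p_n\rangle$, which is nonzero since $\lambda$, being a partition of $n$, refines $n$ (all its parts sum to $n$), so $w_{(n)\lambda}>0$. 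This already proves the statement over $\mathbb F(t)$.

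For the statement about specializations it is enough to show that every root of $g_\lambda(t)$ is $0$ or a root of unity; combined with $g_\lambda(1)\neq 0$ this gives $g_\lambda(\xi)\neq 0$ for every $\xi$ that is nonzero and not a nontrivial root of unity. I would pass to the $Q$-functions: since $Q_\lambda=b_\lambda(t)P_\lambda$ with $b_\lambda(t)=\prod_i\phi_{m_i(\lambda)}(t)$, we have $\langle Q_\lambda(x;t),p_n\rangle=b_\lambda(t)\,g_\lambda(t)$. The crucial point is a closed form exhibiting $\langle Q_\lambda(x;t),p_n\rangle$ as, up to a sign and a power of $t$, a product of factors $1-t^j$; concretely I expect $\langle Q_\lambda(x;t),p_n\rangle=\pm\,t^{a_\lambda}(1-t^n)\prod_{j=1}^{\ell(\lambda)-1}(1-t^j)$ for a suitable integer $a_\lambda\geq 0$. (For $\lambda=(n)$ this is immediate from $\sum_{m\geq 0}q_m(x;t)y^m=\prod_i\frac{1-tx_iy}{1-x_iy}$, which gives $q_n=\sum_{\rho\vdash n}z_\rho^{-1}\prod_i(1-t^{\rho_i})\,p_\rho$ and hence $\langle Q_{(n)},p_n\rangle=1-t^n$; for a hook $\lambda$ of leg length $r$ one obtains $\langle Q_\lambda(x;t),p_n\rangle=(-1)^r(1-t^n)\prod_{j=1}^{r}(1-t^j)$ from the Hall--Littlewood expansion of Schur functions and Theorem~\ref{theorem:theSchur}.) Granting this, $g_\lambda(t)\,b_\lambda(t)$ is a power of $t$ times a product of factors $1-t^j$, so $g_\lambda(t)$ divides such an expression in $\mathbb Z[t]$ and each of its roots is $0$ or a root of unity. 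The two excluded cases are genuinely needed: $\xi=0$ fails whenever some $\lambda^n$ is not a hook (by Theorem~\ref{theorem:theSchur}, since $P_\lambda(x;0)=s_\lambda$), while $g_{(n)}(t)=1+t+\dots+t^{n-1}$ and $g_{(2,2)}(t)=-t-t^3$ vanish at the relevant nontrivial roots of unity.

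The step I expect to be the main obstacle is the product formula for $\langle Q_\lambda(x;t),p_n\rangle$ for general $\lambda$ --- equivalently, for the coefficient of $p_{(n)}$ in the power-sum expansion of $Q_\lambda$, a Green-polynomial-type quantity (cf.\ Macdonald~\cite[Ch.~III]{MR3443860}). One natural route is a $t$-analog of the Murnaghan--Nakayama rule describing the action of the skewing operator $p_n^{\perp}$ on the basis $\{P_\lambda(x;t)\}$; another is an induction built on the Hall--Littlewood Cauchy identity $\sum_\lambda Q_\lambda(x;t)P_\lambda(y;t)=\exp\!\big(\sum_{k\geq 1}\tfrac{1-t^k}{k}\,p_k(x)p_k(y)\big)$, whose degree-$n$ component yields $(1-t^n)\,p_n=\sum_{\lambda\vdash n}g_\lambda(t)\,Q_\lambda(x;t)$ and thereby identifies the $g_\lambda(t)$ with the coefficients of $p_n$ in the $Q$-basis.
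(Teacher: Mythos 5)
Your proposal is correct and follows essentially the same route as the paper: reduce via Lemmas~\ref{lemma:gen1} and~\ref{lemma:gen2} to the nonvanishing of $\langle P_{\lambda^n}(x;t),p_n\rangle$, and then show that this polynomial's only roots are $0$ and nontrivial roots of unity. The product formula you flag as the main obstacle is exactly the one the paper quotes from Macdonald \cite[Ex.~III.7.2]{MR3443860}, namely $\langle Q_\lambda(x;t),p_n\rangle=(1-t^n)t^{n(\lambda)}\phi_{l(\lambda)-1}(t^{-1})$, which after clearing the $t^{-1}$'s is precisely your $\pm\,t^{a_\lambda}(1-t^n)\prod_{j=1}^{l(\lambda)-1}(1-t^j)$ with $a_\lambda=n(\lambda)-\binom{l(\lambda)}{2}\geq 0$; so the gap is one of citation, not substance (your test computations $g_{(n)}=1+\dotsb+t^{n-1}$ and $g_{(2,2)}=-t-t^3$ agree with it). The one genuine difference is how $\xi=1$ is excluded from the bad set: the paper compares the multiplicity of the root $1$ in numerator and denominator of $\langle P_\lambda(x;t),p_n\rangle=(1-t^n)t^{n(\lambda)}\phi_{l(\lambda)-1}(t^{-1})/\prod_i\phi_{m_i(\lambda)}(t)$ (both equal to $l(\lambda)$), whereas you evaluate directly $g_\lambda(1)=\langle m_\lambda,p_n\rangle=\pm w_{(n)\lambda}\neq 0$ via $P_\lambda(x;1)=m_\lambda$ and Mead's observation; your version is arguably cleaner, has the added benefit of giving the $\mathbb F(t)$ statement immediately without the product formula, and requires only the integrality $g_\lambda\in\mathbb Z[t]$, which you justify correctly.
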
 
\begin{proof}
Recall \cite[Ch.~III,~(4.11)]{MR3443860} the $t$-inner product is given by
\begin{equation}
    \label{eq:t-inner}
    \langle p_\lambda,p_\mu\rangle_t = z_\lambda \prod_{i\geq 1}(1-t^{\lambda_i})^{-1},
\end{equation}
where $z_\lambda = \prod_{i\geq 1} i^{m_i}m_i!$.
Note that the $t$-inner product is related to the Hall inner product by (see ~\cite{MR1731818}
)\begin{equation}
    \label{eq:t-inner-prod}
    \langle f, g\rangle_t = \langle f[X/(1-t)], g(x)\rangle,
\end{equation}
where the square brackets signify plethystic substitution.

By \cite[Ex.~III.7.2]{MR3443860},
\begin{align*}
   \langle Q_\lambda(x;t),p_n \rangle_t &=t^{n(\lambda)}\phi_{l(\lambda)-1}(t^{-1})\\
   \langle P_\lambda(x;t),p_n \rangle &=\dfrac{ (1-t^n)t^{n(\lambda)}\phi_{l(\lambda)-1}(t^{-1})}{\prod_{i\geq 1}\phi_{m_i({\lambda})}(t)}, 
\end{align*}
	 where $n(\lambda)=\sum_{i\geq1}(i-1)\lambda_i $, $m_i(\lambda)$ is the number of parts of $\lambda$ equal to $i$, $l(\lambda)=\sum_i m_i$, and $\phi_r(t)=(1-t)(1-t^2)\dots(1-t^r)$.

For the second assertion of the theorem, observe that the possible roots of the above polynomial are  $n$th roots of unity and $0$ but not $1$. Note that $1$ is not a root because $1$ is a root of the denominator with multiplicity $l(\lambda)$ and also a root of the numerator with multiplicity $1+l(\lambda)-1=l(\lambda)$.
The first assertion of the theorem follows from the second.
\end{proof}
When $t=0$, $P_\lambda(x;t)$ specializes to $s_\lambda(x)$ which was considered in Theorem~\ref{theorem:theSchur}.
Let us look more closely at the case where $t$ is a root of unity.
  \begin{theorem}
 \label{theorem:halgen}
 	If $\xi_k$ is a primitive $k$th root of unity with $k>1$, then  for a partition $\lambda$ of n, $\langle P_\lambda(x;\xi_k),p_n \rangle $  is nonzero if and only if one of the following holds:
\begin{itemize}
\item $k$ divides $n$ and $$\sum_{i\geq1} \left\lfloor\dfrac{m_i(\lambda)}{k}\right\rfloor = \left\lfloor\dfrac{l(\lambda) +k-1}{k}\right\rfloor,$$
\item $k$ does not divide $n$ and $$\sum_{i\geq1} \left\lfloor\dfrac{m_i(\lambda)}{k}\right\rfloor = \left\lfloor\dfrac{l(\lambda)-1}{k}\right\rfloor.$$
\end{itemize}
Here $\lfloor x\rfloor$ denotes the greatest integer less than or equal to $x$.
Let $\{\lambda^n\}$ be a graded partition sequence and for each $n\geq0$, $u_n=P_{\lambda^n}(x;\xi_k)$. Then the set $\{u_n(x;\xi_k)\}_{n\geq0}$ is a system of algebraically independent and generates $\Lambda_{\mathbb{F}(t)}$ if and only if, for each $n\geq1$, $\lambda^n$ satisfies one of the above conditions.
\end{theorem}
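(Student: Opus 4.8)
The plan is to apply Lemmas~\ref{lemma:gen1} and~\ref{lemma:gen2} exactly as in the previous theorems: the sequence $\{u_n(x;\xi_k)\}$ is algebraically independent and generates $\Lambda_{\mathbb F}$ if and only if $\langle P_{\lambda^n}(x;\xi_k), p_n\rangle \neq 0$ for every $n\geq 1$. So the whole theorem reduces to the first displayed assertion, the nonvanishing criterion for $\langle P_\lambda(x;\xi_k), p_n\rangle$. The starting point is the closed formula from the proof of Theorem~\ref{theorem:hallp}, namely
\begin{displaymath}
    \langle P_\lambda(x;t),p_n\rangle = \frac{(1-t^n)\, t^{n(\lambda)}\, \phi_{l(\lambda)-1}(t^{-1})}{\prod_{i\geq1}\phi_{m_i(\lambda)}(t)},
\end{displaymath}
a rational function in $t$. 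Substituting $t=\xi_k$ is delicate because both numerator and denominator may vanish at a $k$th root of unity, so the real work is an order-of-vanishing count: I would write $\langle P_\lambda(x;t),p_n\rangle$ as a product/quotient of cyclotomic-type factors $(1-t^j)$ and track, for each factor, the multiplicity with which $(1-\xi_k t^{-1}\cdots)$—more precisely the minimal polynomial of $\xi_k$, i.e.\ the $k$th cyclotomic polynomial—divides numerator and denominator.

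Concretely, I would use the standard fact that $\Phi_k(t)\mid (1-t^j)$ iff $k\mid j$, so the multiplicity of $\Phi_k$ in $\phi_r(t)=\prod_{j=1}^r(1-t^j)$ is $\lfloor r/k\rfloor$. Hence $\Phi_k$ appears in the denominator $\prod_{i\geq1}\phi_{m_i(\lambda)}(t)$ with multiplicity $\sum_{i\geq1}\lfloor m_i(\lambda)/k\rfloor$. For the numerator, $t^{n(\lambda)}$ contributes nothing (it is a unit at $\xi_k\neq0$), the factor $(1-t^n)$ contributes $1$ if $k\mid n$ and $0$ otherwise, and $\phi_{l(\lambda)-1}(t^{-1})$ needs care: since $\xi_k^{-1}$ is also a primitive $k$th root of unity, $\Phi_k(t)$ divides $\phi_{l(\lambda)-1}(t^{-1})$ (cleared of denominators) with multiplicity $\lfloor (l(\lambda)-1)/k\rfloor$. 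Adding these, the numerator has $\Phi_k$-multiplicity $\lfloor(l(\lambda)-1)/k\rfloor + [k\mid n]$, which one rewrites as $\lfloor (l(\lambda)+k-1)/k\rfloor$ when $k\mid n$ and as $\lfloor(l(\lambda)-1)/k\rfloor$ otherwise. The value $\langle P_\lambda(x;\xi_k),p_n\rangle$ is then nonzero precisely when the numerator's $\Phi_k$-multiplicity equals the denominator's, which is exactly the stated pair of conditions; if the numerator multiplicity exceeds the denominator's the value is $0$, and it can never be smaller since $P_\lambda(x;t)$ is a well-defined symmetric function with coefficients in $\mathbb Z[t]$ (no genuine pole), so the denominator multiplicity is always $\leq$ the numerator multiplicity—this last remark is worth making explicit to justify that "equality" is the right condition rather than "$\geq$".

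The main obstacle I anticipate is the bookkeeping around $\phi_{l(\lambda)-1}(t^{-1})$: one must be careful that inverting $t$ sends a primitive $k$th root of unity to another primitive $k$th root of unity (true for all $k$), so the cyclotomic multiplicity count is unaffected by the inversion, and one must clear the negative powers of $t$ consistently so as not to miscount. A secondary subtlety is confirming that no cancellation between numerator and denominator occurs \emph{before} specialization in a way that changes the naive multiplicity difference—this is handled by the integrality of $P_\lambda(x;t)$ just mentioned, or alternatively by noting $\gcd$ considerations among the $\phi$'s. Once the nonvanishing criterion is established, the final sentence of the theorem is immediate from Lemmas~\ref{lemma:gen1} and~\ref{lemma:gen2}, applied over the field $\mathbb F(t)$ (or $\mathbb F$, after specialization), with the observation that the criterion must hold simultaneously for all $n\geq1$.
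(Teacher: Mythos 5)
Your proposal is correct and takes essentially the same approach as the paper: both reduce to the closed formula for $\langle P_\lambda(x;t),p_n\rangle$ from Theorem~\ref{theorem:hallp} and compare the multiplicity of $\xi_k$ as a root of numerator and denominator, with the generation statement then following from Lemmas~\ref{lemma:gen1} and~\ref{lemma:gen2}. Your explicit justification that equality (rather than inequality) of multiplicities is the right criterion, via the fact that the inner product lies in $\mathbb F[t]$, is exactly the content of the paper's Remark~\ref{rem1}.
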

\begin{proof}
Since the inner product is
\begin{equation}
    \label{eq:num-den}
    \langle P_\lambda(x;t),p_n \rangle =\dfrac{ (1-t^n)t^{n(\lambda)}\phi_{l(\lambda)-1}(t^{-1})}{\prod_{n\geq 1}\phi_{m_i({\lambda})}(t)}
\end{equation}
and that $\xi_k$ is a root of $(1-t^j)$ if and only if $k|j$. Hence $\xi_k$ occurs as a root in $\phi_r(t)$ with multiplicity $\lfloor r/k \rfloor$. Now the result easily follows by comparing the multiplicity of $\xi_k$ as a root of the numerator and denominator of \eqref{eq:num-den}.
\end{proof}
\begin{remark} \label{rem1}
    We remark that in the equalities of the statement of the Theorem~\ref{theorem:halgen}, the left-hand side is always less than or equal to the right-hand side since the inner product $\langle P_\lambda(x;t),p_n \rangle \in \mathbb F[t]$.
\end{remark}
 The following theorems are very similar to the above two theorems.
\begin{theorem} 
Let $\{\lambda^n\}$ be a graded partition sequence and for each $n\geq0$, $u_n=Q_{\lambda^n}$. Then 
 the set $\{u_n(x;t)\}_{n\geq0}$ is a system of algebraically independent generators of $\Lambda_{\mathbb{F}(t)}$.
For $\xi\in \mathbb F$, the sequence $\{u_n(x;\xi)\}_{n\geq0}$ of specializations forms a system of algebraically independent generators of $\Lambda_{\mathbb F}$ if $\xi\neq 0$ and $\xi$ is not a root of unity.
\end{theorem}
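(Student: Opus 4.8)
The plan is to mimic the proof of Theorem~\ref{theorem:hallp}, now with $Q$ in place of $P$. By Lemmas~\ref{lemma:gen1} and~\ref{lemma:gen2} (here $Q_\emptyset=1$ and each $Q_{\lambda^n}(x;t)$, $Q_{\lambda^n}(x;\xi)$ is homogeneous of degree $n$), the whole question reduces to deciding when the ordinary Hall inner product $\langle Q_{\lambda^n}(x;t),p_n\rangle$ is a unit of $\mathbb F(t)$, respectively when $\langle Q_{\lambda^n}(x;\xi),p_n\rangle$ is a unit of $\mathbb F$. So the first step is to read this inner product off from the formula for $\langle P_\lambda(x;t),p_n\rangle$ recalled in the proof of Theorem~\ref{theorem:hallp}: multiplying that formula by the scalar $\prod_{i\geq1}\phi_{m_i(\lambda)}(t)$ and using $Q_\lambda(x;t)=\bigl(\prod_{i\geq1}\phi_{m_i(\lambda)}(t)\bigr)P_\lambda(x;t)$ together with the $\mathbb F(t)$-bilinearity of the Hall inner product makes the denominator cancel and leaves
\[
\langle Q_\lambda(x;t),p_n\rangle \;=\; (1-t^n)\,t^{n(\lambda)}\,\phi_{l(\lambda)-1}(t^{-1}).
\]

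The second step is to analyse the right-hand side. It is a \emph{nonzero} polynomial in $t$: the factor $t^{n(\lambda)}$ absorbs all the negative powers of $t$ contributed by $\phi_{l(\lambda)-1}(t^{-1})$, since $n(\lambda)=\sum_{i\geq1}(i-1)\lambda_i\geq\binom{l(\lambda)}{2}$, and none of $1-t^n$, $t^{n(\lambda)}$, $\phi_{l(\lambda)-1}(t^{-1})$ vanishes identically. Every root of this polynomial lies in the set consisting of $0$ together with the roots of unity in $\mathbb F$: $1-t^n$ vanishes only at $n$th roots of unity, each factor $1-t^{-j}$ of $\phi_{l(\lambda)-1}(t^{-1})$ only at $j$th roots of unity, and $t^{n(\lambda)}$ only at $0$. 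Consequently, if $\xi\in\mathbb F$ is nonzero and not a root of unity, then $\langle Q_{\lambda^n}(x;\xi),p_n\rangle\neq0$ for every $n\geq1$, and $\langle u_0,p_0\rangle=\langle 1,1\rangle=1$, so $\langle u_n(x;\xi),p_n\rangle$ is a unit of $\mathbb F$ for all $n\geq0$; Lemma~\ref{lemma:gen2} then gives that $\{u_n(x;\xi)\}_{n\geq0}$ generates $\Lambda_{\mathbb F}$, and Lemma~\ref{lemma:gen1} gives algebraic independence. The first assertion is then immediate, either directly---$\langle Q_{\lambda^n}(x;t),p_n\rangle$ is a nonzero polynomial, hence a unit of the field $\mathbb F(t)$, with no condition on the partition sequence, so the same two lemmas apply over $\mathbb F(t)$---or as a consequence of the specialization statement at any $\xi\in\mathbb F$ which is neither $0$ nor a root of unity, such $\xi$ existing because $\mathbb F$ has characteristic zero.

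I do not expect a genuine obstacle: once the inner-product identity is established, the rest is the same root-location bookkeeping as in Theorem~\ref{theorem:hallp}, only simpler because here there is no denominator. The one point deserving a line of justification is the polynomiality of $t^{n(\lambda)}\phi_{l(\lambda)-1}(t^{-1})$---equivalently the inequality $n(\lambda)\geq\binom{l(\lambda)}{2}$, with equality exactly when $\lambda=(1^{l(\lambda)})$---which is what makes the phrase ``root of $\langle Q_\lambda(x;t),p_n\rangle$'' unambiguous and ensures that no zero outside $\{0\}\cup\{\text{roots of unity}\}$ is introduced.
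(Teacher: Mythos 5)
Your proof is correct and follows essentially the same route as the paper: derive $\langle Q_\lambda(x;t),p_n\rangle=(1-t^n)\,t^{n(\lambda)}\phi_{l(\lambda)-1}(t^{-1})$ by clearing the denominator from the $P_\lambda$ formula, observe that all roots of this polynomial lie in $\{0\}\cup\{\text{roots of unity}\}$, and invoke Lemmas~\ref{lemma:gen1} and~\ref{lemma:gen2}; the paper's only additional remark is that, unlike the $P$ case, $t=1$ is now a genuine root, which is exactly why the hypothesis here excludes all roots of unity rather than only the non-trivial ones. The sole blemish is the inessential aside that $n(\lambda)=\binom{l(\lambda)}{2}$ forces $\lambda=(1^{l(\lambda)})$: equality in fact holds precisely when $\lambda$ is a hook, but this plays no role in the argument.
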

\begin{proof}
    the proof follows from the definition of $Q_\lambda(x;t)$ and the proof Theorem ~\ref{theorem:hallp}.
    The only difference is that
    \begin{equation}
    \label{eq:num-den-Q}
    \langle Q_\lambda(x;t),p_n \rangle = (1-t^n)t^{n(\lambda)}\phi_{l(\lambda)-1}(t^{-1}).
\end{equation}
    has a root at $t=1$.
\end{proof}
When $t=0$, $Q_\lambda=s_\lambda$, we are reduced to the case of Schur functions (Theorem~\ref{theorem:theSchur}).
When $t$ is a root of unity, we have the following result.
 \begin{theorem}
 \label{theorem:halgenQ}
 	If $\xi_k$ is a primitive $k$th root of unity, then for a partition $\lambda$ of n, $\langle Q_\lambda(x;\xi_k),p_n \rangle $  is nonzero if and only if $k$ does not divide $n$ and $k\geq l(\lambda)-1$.
    In particular, for any graded partition sequence $\{\lambda^n\}$, $u_n=Q_{\lambda^n}(x;\xi_k)$, $\{u_n\}_{n\geq0}$ is algebraically dependent.  
\end{theorem}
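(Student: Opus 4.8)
The plan is to reduce the whole statement to a short computation with roots of unity, taking the closed form \eqref{eq:num-den-Q} as the only input. Since $\xi_k\neq 0$, I may simply evaluate the Laurent polynomial $\langle Q_\lambda(x;t),p_n\rangle=(1-t^n)\,t^{n(\lambda)}\phi_{l(\lambda)-1}(t^{-1})$ at $t=\xi_k$, which presents the result as a product of three factors: $1-\xi_k^n$; the nonzero root of unity $\xi_k^{n(\lambda)}$; and $\phi_{l(\lambda)-1}(\xi_k^{-1})=\prod_{j=1}^{l(\lambda)-1}(1-\xi_k^{-j})$.

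Next I would read off when each factor vanishes. As $\xi_k$ is a primitive $k$th root of unity, $1-\xi_k^n=0$ exactly when $k\mid n$, while the $j$th factor $1-\xi_k^{-j}$ vanishes exactly when $k\mid j$; letting $j$ run over $1,\dots,l(\lambda)-1$, some such $j$ occurs if and only if $l(\lambda)-1\geq k$. Hence $\langle Q_\lambda(x;\xi_k),p_n\rangle\neq 0$ precisely when $k\nmid n$ and $k$ divides none of $1,\dots,l(\lambda)-1$, equivalently $k\geq l(\lambda)$. One can also phrase this uniformly: $\langle Q_\lambda(x;t),p_n\rangle$ has $\xi_k$ as a root exactly when $k$ divides one of the exponents $n,1,2,\dots,l(\lambda)-1$ that appear among the factors $1-t^j$.

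For the ``in particular'' clause I would invoke Lemmas~\ref{lemma:gen1} and~\ref{lemma:gen2}: over the field $\mathbb F$, the sequence $\{u_n\}_{n\geq 0}$ is algebraically independent if and only if it generates $\Lambda_{\mathbb F}$, and this happens if and only if $\langle u_n,p_n\rangle\neq 0$ for every $n$. Now take $n=k$; then $\lambda^k$ is a partition of $k$, so $k\mid|\lambda^k|$, and the criterion above forces $\langle u_k,p_k\rangle=\langle Q_{\lambda^k}(x;\xi_k),p_k\rangle=0$ no matter which $\lambda^k$ was chosen. Consequently $\{u_n\}_{n\geq0}$ fails to generate $\Lambda_{\mathbb F}$ and is therefore algebraically dependent, for every graded partition sequence.

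I do not expect a genuine obstacle here: formula \eqref{eq:num-den-Q} does all the work and the rest is bookkeeping. The only points needing a moment's care are that the specialization at $t=\xi_k$ is legitimate (immediate, since $\xi_k\neq 0$), and the elementary equivalence ``$k$ divides some $j$ with $1\leq j\leq l(\lambda)-1$'' $\Longleftrightarrow$ ``$k\leq l(\lambda)-1$'', i.e.\ that the least positive multiple of $k$ is $k$ itself.
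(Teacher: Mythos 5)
Your proposal is correct and is essentially the paper's own argument: evaluate the closed form \eqref{eq:num-den-Q} at $t=\xi_k$ factor by factor, observe that $1-\xi_k^n$ vanishes iff $k\mid n$ and that $\phi_{l(\lambda)-1}(\xi_k^{-1})$ vanishes iff $k\leq l(\lambda)-1$, and then take $n=k$ (so that $k\mid n$ forces $\langle u_k,p_k\rangle=0$) for the algebraic-dependence claim. One remark: your computation gives the nonvanishing condition $k\nmid n$ and $k\geq l(\lambda)$, which agrees with the paper's own proof (``$\xi_k$ is a root iff $k\mid n$ or $k\leq l(\lambda)-1$''), so the ``$k\geq l(\lambda)-1$'' in the theorem statement is an off-by-one slip rather than an error in your argument.
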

\begin{proof}
By \eqref{eq:num-den-Q}, $\xi_k$ is a root of $\langle Q_\lambda(x,\xi_k),p_n\rangle$ if and only if $k|n$ or $k\leq l(\lambda)-1$.
Thus, for any fixed $k$, $\langle Q_\lambda(x,\xi_k),p_n\rangle=0$ when $n=k$ and $\lambda\vdash n$.
Hence $\{Q_{\lambda^n}(x,\xi_k)\}_{n\geq0}$ will not be algebraically independent by Lemmas~\ref{lemma:gen1} and ~\ref{lemma:gen2}.
\end{proof}
\subsection{Big Schur functions $S_{\lambda}(x;t)$ and the functions $q_{\lambda}(x;t)$} 
For each $n\geq 0$, let $q_n(x;t)=Q_{(n)}(x;t)$.
The big Schur function is defined by $S_\lambda(x;t)=\det(q_{\lambda_i-i+j}(x;t))$.

The big Schur function $S_{\lambda}(x;t)$ behaves like the Schur function $s_{\lambda}(x)$.
Indeed, we have
\begin{theorem}
Let $\{\lambda^n\}$ be a graded partition sequence and for each $n\geq0$, $u_n=S_{\lambda^n}(x;t)$. Then the set $\{u_n\}_{n\geq0}$ is algebraically independent and generates $\Lambda_{\mathbb{F}(t)}$ if and only if, for each $n\geq0$, $\lambda^n $  is a hook.
For $\xi\in \mathbb F$, the sequence $\{u_n(x;\xi)\}_{n\geq0}$ of specializations forms a system of algebraically independent generators of $\Lambda_{\mathbb F}$ if and only if $\lambda^n$ is a hook for each $n\geq 0$ and $\xi$ is not a root of unity.
\end{theorem}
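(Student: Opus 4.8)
The plan is to reduce everything to a single computation of the Hall inner product $\langle S_{\lambda^n}(x;t),p_n\rangle$ and then invoke Lemmas~\ref{lemma:gen1} and~\ref{lemma:gen2}, exactly as in the proof of Theorem~\ref{theorem:theSchur}. The key observation I would use is that $S_\lambda(x;t)$ is the image of the ordinary Schur function $s_\lambda$ under the $\mathbb F(t)$-algebra endomorphism $\varphi$ of $\Lambda_{\mathbb F(t)}=\mathbb F(t)[p_1,p_2,\dotsc]$ determined by $\varphi(p_r)=(1-t^r)p_r$; equivalently, $S_\lambda(x;t)=s_\lambda[(1-t)X]$. To justify this I would expand $\sum_{r\geq0}q_r(x;t)z^r=\prod_i\frac{1-tx_iz}{1-x_iz}$ and take logarithms to obtain $q_r=\sum_{\nu\vdash r}z_\nu^{-1}\prod_i(1-t^{\nu_i})p_\nu$, which is exactly $\varphi(h_r)$ (since $h_r=\sum_{\nu\vdash r}z_\nu^{-1}p_\nu$); because the Jacobi--Trudi determinant is a polynomial over $\mathbb Z$ in the entries $h_{\lambda_i-i+j}$ (with $h_0=1$ and $h_k=0$ for $k<0$, both fixed by $\varphi$), applying $\varphi$ to $s_\lambda=\det(h_{\lambda_i-i+j})$ yields $\varphi(s_\lambda)=\det(q_{\lambda_i-i+j})=S_\lambda(x;t)$.

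Granting this, the inner product would be immediate: for any $f=\sum_\nu c_\nu p_\nu\in\Lambda_{\mathbb F(t)}$ one has $\langle\varphi(f),p_n\rangle=\sum_\nu c_\nu\prod_i(1-t^{\nu_i})\langle p_\nu,p_n\rangle=(1-t^n)\langle f,p_n\rangle$, since $\langle p_\nu,p_n\rangle=z_n\delta_{\nu,(n)}$ picks out the part $(n)$. Taking $f=s_\lambda$ and inserting the value $\langle s_\lambda,p_n\rangle=(-1)^{n-\lambda_1}$ for hooks (and $0$ otherwise) from the proof of Theorem~\ref{theorem:theSchur}, I would get
\begin{displaymath}
\langle S_\lambda(x;t),p_n\rangle=\begin{cases}(-1)^{n-\lambda_1}(1-t^n)&\text{if }\lambda\text{ is a hook},\\ 0&\text{otherwise}.\end{cases}
\end{displaymath}

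From here both assertions fall out. For the first, $(-1)^{n-\lambda_1^n}(1-t^n)$ is a unit of the field $\mathbb F(t)$ if and only if it is nonzero, i.e.\ if and only if $\lambda^n$ is a hook, so Lemmas~\ref{lemma:gen1} and~\ref{lemma:gen2} give the statement. For the specialization at $\xi\in\mathbb F$, the value $\langle S_{\lambda^n}(x;\xi),p_n\rangle$ equals $(-1)^{n-\lambda_1^n}(1-\xi^n)$ when $\lambda^n$ is a hook; this is a nonzero scalar for every $n\geq1$ precisely when $\xi^n\neq1$ for all $n\geq1$, i.e.\ when $\xi$ is not a root of unity. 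Conversely, if some $\lambda^n$ is not a hook, or if $\xi^k=1$ for some $k\geq1$ (apply the formula with $n=k$), then the corresponding inner product vanishes, so by Lemma~\ref{lemma:gen2} the sequence does not generate $\Lambda_{\mathbb F}$ and hence, the base being a field, is not algebraically independent by Lemma~\ref{lemma:gen1}. I do not expect a genuine obstacle here: the only non-formal ingredient is the classical identification $S_\lambda(x;t)=s_\lambda[(1-t)X]$, and once it is in place the argument is bookkeeping with power-sum expansions together with Theorem~\ref{theorem:theSchur}. The single point deserving a line of care is the status of $\xi=0$: it is \emph{not} a root of unity, and indeed $S_\lambda(x;0)=s_\lambda$, so the hook condition is exactly what Theorem~\ref{theorem:theSchur} requires — which is why, unlike the analogous results for $P_\lambda$ and $Q_\lambda$, no separate exclusion of $0$ is needed.
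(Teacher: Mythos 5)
Your proposal is correct and takes essentially the same route as the paper: both arguments reduce to the single identity $\langle S_{\lambda}(x;t),p_n\rangle=(1-t^n)\langle s_{\lambda},p_n\rangle$ and then combine Theorem~\ref{theorem:theSchur} with Lemmas~\ref{lemma:gen1} and~\ref{lemma:gen2}. The only (harmless) difference is in how that factor $(1-t^n)$ is justified --- you derive $S_\lambda=s_\lambda[(1-t)X]$ from the generating function for $q_r$ and the Jacobi--Trudi determinant, while the paper cites the duality of $\{S_\lambda\}$ and $\{s_\lambda\}$ under the $t$-inner product together with \eqref{eq:t-inner-prod}; your remark that $\xi=0$ needs no separate exclusion is also consistent with the paper's statement.
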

\begin{proof}
Since $\{S_\lambda\mid \lambda\in\Par\}$ and $\{s_\lambda\mid\lambda\in\Par\}$ are dual bases with respect to the $t$-inner product and using the equation~\eqref{eq:t-inner-prod}, we have
\begin{align*}
    \langle S_\lambda(x;t),p_n \rangle & =  (1-t^n)\langle s_\lambda(x),p_n \rangle\\
    & = \begin{cases}
        (-1)^{n-\lambda_1}(1-t^n)&\text{if }\lambda\vdash n \text{ is a hook,}\\
        0 & \text{otherwise,}
    \end{cases}
\end{align*}
from which the theorem follows.
\end{proof}
Since, $q_{\lambda}(x;t)=\prod\limits_{i\geq1}q_{\lambda_i}(x;t)$, the only algebraically independent generating set that can come from $\{q_{\lambda}(x;t)\mid \lambda \in \Par\}$ is $\{q_n(x;t)\}_{n\geq0}$.
\subsection{Skew Hall-Littlewood symmetric functions}
The skew Hall-Littlewood symmetric function $P_{\lambda/\mu}$ is defined by $\langle P_{\lambda/\mu},P_\nu\rangle_t =\langle P_{\lambda},P_\mu P_\nu\rangle_t$.
Since at $t=0$, $P_\lambda(x;0)=s_\lambda(x)$ and the $t$-inner product coincides with our usual inner product, we have $P_{\lambda/\mu}(x;0) =s_{\lambda/\mu}(x).$ Using Theorem~\ref{theorem:SkewSchur}, we have
\begin{theorem}
Let $\{\lambda^n/\mu^n\}$ be a graded skew partition sequence and for each $n\geq0$, $u_n=P_{\lambda^n/\mu^n}$. Then the set $\{u_n\}_{n\geq0}$ is algebraically independent and generates $\Lambda_{\mathbb F}$ if, for each $n\geq0$, $\lambda^n/\mu^n$ is a ribbon.
\end{theorem}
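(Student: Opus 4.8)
The plan is to deduce the result from the Schur case (Theorem~\ref{theorem:SkewSchur}) by specializing the parameter to $t=0$. The guiding principle is Lemma~\ref{lemma:gen2}: over a field, whether a sequence $\{u_n\}$ of homogeneous elements generates $\Lambda$ is controlled by the single requirement that each $\langle u_n,p_n\rangle$ be nonzero, and nonvanishing of a polynomial in $t$ can be tested at any one convenient value of $t$. Since $P_{\lambda^n/\mu^n}(x;0)=s_{\lambda^n/\mu^n}(x)$ (as noted just before the theorem), the value $t=0$ is the natural choice.

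First I would record that the skew Hall--Littlewood function lies in $\Lambda_{\mathbb Z[t]}$: writing $P_{\lambda/\mu}=\sum_\nu f^{\lambda}_{\mu\nu}(t)\,P_\nu$ with the Hall polynomials $f^\lambda_{\mu\nu}(t)\in\mathbb Z[t]$ and each $P_\nu\in\Lambda_{\mathbb Z[t]}$, every coefficient of $P_{\lambda/\mu}$ in the monomial basis is a polynomial in $t$ with integer coefficients. Hence, for a skew partition $\lambda^n/\mu^n$ of $n$, the quantity
\[
g_n(t):=\langle P_{\lambda^n/\mu^n}(x;t),\,p_n\rangle
\]
(Hall inner product) lies in $\mathbb Z[t]\subseteq\mathbb F[t]$; and since the Hall pairing of $\Lambda_{\mathbb Z[t]}$ with $\Lambda_{\mathbb Z}$ takes values in $\mathbb Z[t]$ and commutes with the substitution $t\mapsto 0$, we get $g_n(0)=\langle P_{\lambda^n/\mu^n}(x;0),\,p_n\rangle=\langle s_{\lambda^n/\mu^n},\,p_n\rangle$. (The equality $P_{\lambda/\mu}(x;0)=s_{\lambda/\mu}(x)$ comes, as in the text, from the fact that at $t=0$ the $t$-inner product agrees with the Hall inner product and $P_\nu(x;0)=s_\nu(x)$, so the relations $\langle P_{\lambda/\mu},f\rangle_t=\langle P_\lambda,P_\mu f\rangle_t$ defining $P_{\lambda/\mu}$ specialize to those defining $s_{\lambda/\mu}$.)

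Now I would assume that each $\lambda^n/\mu^n$ is a ribbon. By Theorem~\ref{theorem:SkewSchur} (and the inner product formula displayed in its proof), $\langle s_{\lambda^n/\mu^n},p_n\rangle=\pm1$, so $g_n(0)\neq0$; therefore $g_n(t)$ is a nonzero element of $\mathbb F[t]$, hence a unit of the field $\mathbb F(t)$. Applying Lemmas~\ref{lemma:gen1} and~\ref{lemma:gen2} with $R=\mathbb F(t)$ then shows that $\{P_{\lambda^n/\mu^n}(x;t)\}_{n\geq0}$ is algebraically independent over $\mathbb F(t)$ and generates $\Lambda_{\mathbb F(t)}$, which is the assertion.

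The only real obstacle is the bookkeeping around the $t=0$ specialization: one must know that $P_{\lambda/\mu}$ has polynomial (not merely rational) $t$-coefficients and that its value at $t=0$ is $s_{\lambda/\mu}$. Both are standard facts about (skew) Hall--Littlewood polynomials, the second being precisely the observation recorded in the paragraph preceding the theorem, so once they are granted the argument reduces entirely to Theorem~\ref{theorem:SkewSchur} and Lemmas~\ref{lemma:gen1}--\ref{lemma:gen2}. I would also flag that, unlike the non-skew situation, this argument yields only a sufficient condition; pinning down a necessary and sufficient condition would require evaluating $g_n(t)$ explicitly in the spirit of Theorem~\ref{theorem:hallp}, which is not attempted here.
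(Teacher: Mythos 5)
Your argument is essentially the paper's own (the paper's proof is a single sentence: at $t=0$ the $t$-inner product becomes the Hall inner product and $P_{\lambda/\mu}(x;0)=s_{\lambda/\mu}(x)$, so Theorem~\ref{theorem:SkewSchur} applies), with you supplying the specialization bookkeeping that the paper leaves implicit. One minor quibble: the expansion $\sum_\nu f^\lambda_{\mu\nu}(t)\,(\cdot)_\nu$ with Hall polynomials is really the $Q_{\lambda/\mu}$ expansion, and for $P_{\lambda/\mu}$ the coefficients pick up factors $b_\mu b_\nu/b_\lambda$ which are rational in $t$ rather than polynomial --- but since each $b$ has constant term $1$ these are regular at $t=0$, so your evaluation argument goes through unchanged.
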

The converse seems to be an interesting open problem. In fact, for any partition $\lambda$, finding the P-Hall-Littlewood expansion of $P_\lambda(x;t) p_n$ is interesting in its own right.
\begin{conjecture}
    Let $\{\lambda^n/\mu^n\}$ be a graded skew partition sequence and for each $n\geq0$, $u_n=P_{\lambda^n/\mu^n}$. The set $\{u_n\}_{n\geq0}$ is algebraically independent and generates $\Lambda_{\mathbb{F}(t)}$ then for each $n\geq0$, $\mu^n\subset \lambda^n$ and the skew diagram $\lambda^n/\mu^n$ cannot be separated by a column i.e.,
    there exists no $j$ such that the $j$th column of $\lambda^n/\mu^n$ has no cells, but $\lambda^n/\mu^n$ has cells both to the left and to the right of the $j$th column.
\end{conjecture}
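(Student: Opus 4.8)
The plan is to deduce the conjecture from Lemmas~\ref{lemma:gen1} and~\ref{lemma:gen2} and then, on the interesting side, to show that $P_{\lambda/\mu}(x;t)$ \emph{factors} as soon as $\lambda/\mu$ is separated by a column. By Lemma~\ref{lemma:gen2}, $\{P_{\lambda^n/\mu^n}(x;t)\}_{n\geq0}$ generates $\Lambda_{\mathbb F(t)}$ --- and is then automatically algebraically independent by Lemma~\ref{lemma:gen1}, since $\mathbb F(t)$ is a field --- if and only if $\langle P_{\lambda^n/\mu^n}(x;t),p_n\rangle\neq 0$ for every $n$. In degree $n$ the Hall pairing satisfies $\langle f,p_n\rangle=n\,[p_n]f$, where $[p_n]f$ is the coefficient of $p_n$ in the power-sum expansion of $f$, and by~\eqref{eq:t-inner} one also has $[p_n]f=\tfrac{1-t^n}{n}\langle f,p_n\rangle_t$; together with the defining relation $\langle P_{\lambda^n/\mu^n}(x;t),p_n\rangle_t=\langle P_{\lambda^n}(x;t),P_{\mu^n}(x;t)\,p_n\rangle_t$ of the skew Hall--Littlewood functions, the generating condition becomes: for every $n$, $P_{\lambda^n}(x;t)$ occurs with nonzero coefficient in the $P$-expansion of $p_n\,P_{\mu^n}(x;t)$, equivalently $[p_n]P_{\lambda^n/\mu^n}(x;t)\neq 0$. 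So I would prove the contrapositive: if for some $n$ one has $\mu^n\not\subseteq\lambda^n$, or $\lambda^n/\mu^n$ is separated by a column, then $[p_n]P_{\lambda^n/\mu^n}(x;t)=0$. Fix such an $n$ and write $\lambda=\lambda^n$, $\mu=\mu^n$. The case $\mu\not\subseteq\lambda$ is immediate: $P_{\lambda/\mu}(x;t)=0$, because the Hall--Littlewood structure constants (equivalently, the coefficients of $P_\lambda$ in $p_nP_\mu$) vanish unless $\mu\subseteq\lambda$.

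For the main case, suppose $\mu\subseteq\lambda$ and that the $j$th column of $\lambda/\mu$ is empty while $\lambda/\mu$ has cells both to its left and to its right. One checks easily that $\lambda/\mu$ then splits as a disjoint union of two nonempty skew diagrams $\theta_1$ and $\theta_2$, with $\theta_1$ confined to columns $1,\dots,j-1$ and $\theta_2$ to columns $j+1,j+2,\dots$; moreover --- since $(i,j)$ would otherwise lie in $\lambda/\mu$ --- every row meeting $\theta_1$ has $\mu_i\leq j-2$ and $\lambda_i\leq j-1$, every row meeting $\theta_2$ has $\mu_i\geq j$, and every row meeting neither is constant. In particular no row's length ever reaches across the value $j$ as one builds up a semistandard skew tableau $T$ of shape $\lambda/\mu$ through interpolating shapes $\mu=\lambda^{(0)}\subseteq\lambda^{(1)}\subseteq\cdots\subseteq\lambda$.

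I would then feed this into Macdonald's branching formula $P_{\lambda/\mu}(x;t)=\sum_T\psi_T(t)\,x^{\mathrm{cont}(T)}$ \cite{MR3443860}, where $\psi_T(t)=\prod_k\psi_{\lambda^{(k)}/\lambda^{(k-1)}}(t)$ and $\psi_{\nu/\kappa}(t)$ is Macdonald's explicit product of column-indexed local factors built from the part-multiplicities of $\nu$ and $\kappa$. The horizontal strip $\lambda^{(k)}/\lambda^{(k-1)}$ at each level is a disjoint union of a strip inside the columns of $\theta_1$ and one inside the columns of $\theta_2$, and --- this is the crux --- the part-multiplicities governing the factors attached to the columns of $\theta_1$ involve only parts of size $\leq j-1$, hence are unaffected by $\theta_2$, while those governing the columns of $\theta_2$ involve only parts of size $\geq j$, hence are unaffected by $\theta_1$. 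Thus $\psi_T(t)=\psi^{(1)}_T(t)\,\psi^{(2)}_T(t)$ with $\psi^{(r)}_T$ depending only on $T|_{\theta_r}$, and summing over $T$ yields $P_{\lambda/\mu}(x;t)=F(x;t)\,G(x;t)$ with $F,G$ homogeneous of positive degrees $|\theta_1|$ and $|\theta_2|$. Since $p_n$ is primitive for the coproduct of $\Lambda$, any product of two homogeneous symmetric functions of positive degree has zero coefficient of $p_n$ in the power-sum basis; hence $[p_n]P_{\lambda/\mu}(x;t)=0$, as required.

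The main obstacle will be exactly this column-multiplicativity of $\psi_T$ --- making rigorous that Macdonald's $\psi$-factors ``localize'' to the two column-blocks. It is also where the hypothesis genuinely has to be ``separated by a column'' and not ``by a row'': a row-separable skew shape can have its two blocks overlapping in column range, so a single row may grow from a length below the separating index to one above it, and then the multiplicities feeding the $\psi$-factors no longer split --- which is consistent with the fact that ribbons (connected in both directions) form only a proper subclass of the shapes the conjecture permits. Extracting carefully from Macdonald's formula for $\psi_{\nu/\kappa}(t)$ that its factors attached to columns $<j$ never see a part of size $\geq j$ (and conversely), including when the ambient inner rows contribute to the relevant multiplicities, is the only real work; I anticipate no conceptual difficulty beyond this bookkeeping, though the precise statement of $\psi_{\nu/\kappa}(t)$ must be tracked with care, and it may be cleanest to isolate the factorization $P_{\lambda/\mu}=P_{\theta_1}$-type $\,\cdot\,$ $P_{\theta_2}$-type as a standalone lemma.
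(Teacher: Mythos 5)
This statement is not proved in the paper: it is stated as a conjecture, and the author remarks immediately above it that this direction ``seems to be an interesting open problem.'' So there is no proof of record to compare against, and your proposal has to be judged on its own. Your reduction is the right one: by Lemmas~\ref{lemma:gen1} and~\ref{lemma:gen2} it suffices to show that $\mu\not\subseteq\lambda$ or column-separability forces $\langle P_{\lambda/\mu},p_n\rangle=0$, and since $\langle f,p_n\rangle=n\,[p_n]f$ while $\langle f,p_n\rangle_t=\tfrac{n}{1-t^n}[p_n]f$, the Hall pairing with $p_n$ vanishes exactly when the $t$-pairing does. (Note only that the paper's $P_{\lambda/\mu}$, defined through $P_\mu$ rather than $Q_\mu$ in the adjunction, differs from Macdonald's by a nonzero scalar in $\mathbb F(t)$; this is harmless.) The case $\mu\not\subseteq\lambda$ is indeed immediate from the vanishing of the structure constants.

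The step you defer --- that the $\psi$-weights in $P_{\lambda/\mu}=\sum_T\psi_T(t)x^T$ localize across an empty column --- does go through, for exactly the reason you give. If column $j$ of $\lambda/\mu$ is empty with cells on both sides, the rows $i$ with $\mu_i\geq j$ form an initial segment $1,\dotsc,r$ carrying all cells in columns $>j$, while rows $i>r$ have $\lambda_i\leq j-1$ and carry all cells in columns $<j$; so every intermediate shape $\kappa$ in the chain of a tableau has $\kappa_i\geq j$ for $i\leq r$ and $\kappa_i\leq j-1$ for $i>r$. In $\psi_{\nu/\kappa}(t)=\prod_{j'}(1-t^{m_{j'}(\kappa)})$, the product taken over columns $j'$ such that the strip $\nu-\kappa$ meets column $j'+1$ but not column $j'$, one checks that $j'=j-1$ never occurs (column $j$ is always empty), that the indices $j'\leq j-2$ and $j'\geq j$ are governed by the two sub-strips separately, and that for $j'\leq j-2$ (resp.\ $j'\geq j$) the multiplicity $m_{j'}(\kappa)$ counts only parts from rows $>r$ (resp.\ $\leq r$), constant rows included. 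Since the two blocks also occupy disjoint row sets, tableaux of shape $\lambda/\mu$ are exactly pairs of tableaux of the blocks, and $P_{\lambda/\mu}$ factors into homogeneous pieces of positive degree; for instance $P_{(3,1)/(2)}=(1-t)p_1^2$, whose $p_2$-coefficient is $0$. Your last step is also correct: such a product lies in the span of $p_\rho$ with $\ell(\rho)\geq2$. Two small corrections to your commentary: an empty separating column automatically forces the blocks into disjoint rows (a consequence of $\lambda,\mu$ being partitions), so no gluing constraints arise; and a row-separated shape also has column-disjoint blocks --- what fails there is not a row crossing the index $j$ but that the boundary column can enter the product with a multiplicity receiving contributions from both blocks (consistent with $\langle P_{(2,1)/(1)},p_2\rangle\neq0$). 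Modulo writing out this bookkeeping, your argument settles the conjecture, which goes beyond what the paper establishes.
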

\subsection{Schur's $P$-functions and $Q$-functions}
Schur's $P$-functions and $Q$-functions are specializations of $P_\lambda(x;t)$ and $Q_\lambda(x;t)$ at $t=-1$ respectively. We shall use the general result in Theorem~\ref{theorem:halgen} for this specialization. So we easily deduce the following from Theorem~\ref{theorem:halgen}.
\begin{theorem}
\label{theorem:Schur-P}
For a partition $\lambda$ of $n$, $\langle P_\lambda(x;-1),p_n \rangle $  is nonzero if and only if one of the following holds:
\begin{itemize}
\item $2$ divides $n$ and $\sum_{i\geq1} \lfloor(m_i(\lambda)/2)\rfloor = \lfloor(l(\lambda) +1)/2\rfloor$,
\item $2$ does not divide $n$ and $\sum_{i\geq1} \lfloor(m_i(\lambda)/2)\rfloor = \lfloor(l(\lambda) -1)/2\rfloor$.
\end{itemize}
Let $\{\lambda^n\}$ be a graded partition sequence and for each $n\geq0$, $u_n=P_{\lambda^n}(x;-1)$. Then the set $\{u_n(x;-1)\}_{n\geq0}$ is a system of algebraically independent generators of $\Lambda_{\mathbb{F}}$ if and only if, for each $n\geq1$, $\lambda^n$ satisfies one of the above properties.
\end{theorem}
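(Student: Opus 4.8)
The plan is to obtain Theorem~\ref{theorem:Schur-P} as the $k=2$ instance of Theorem~\ref{theorem:halgen}. By definition Schur's $P$-function $P_\lambda(x;-1)$ is the specialization of $P_\lambda(x;t)$ at $t=-1$, and since the characteristic of $\mathbb F$ is zero we have $-1\neq 1$, so $-1$ is a primitive $2$nd root of unity in $\mathbb F$. Before quoting Theorem~\ref{theorem:halgen} I would record that, for $\lambda\vdash n$, the inner product $\langle P_\lambda(x;t),p_n\rangle$ is an honest polynomial in $t$ with coefficients in $\mathbb F$ (this is exactly what the proof of Theorem~\ref{theorem:hallp} and Remark~\ref{rem1} give us), so evaluating at $t=-1$ is legitimate and its value is $\langle P_\lambda(x;-1),p_n\rangle$ computed in $\Lambda_{\mathbb F}$.

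With this in hand the first assertion is purely a matter of substitution. Putting $k=2$ and $\xi_k=-1$ in Theorem~\ref{theorem:halgen}, the quantity $\lfloor(l(\lambda)+k-1)/k\rfloor$ becomes $\lfloor(l(\lambda)+1)/2\rfloor$, the quantity $\lfloor(l(\lambda)-1)/k\rfloor$ becomes $\lfloor(l(\lambda)-1)/2\rfloor$, and $\sum_{i\geq1}\lfloor m_i(\lambda)/k\rfloor$ becomes $\sum_{i\geq1}\lfloor m_i(\lambda)/2\rfloor$, while ``$k$ divides $n$'' becomes ``$2$ divides $n$''. The two cases of Theorem~\ref{theorem:halgen} thus read off verbatim as the two bulleted conditions in the statement, which is the claimed criterion for $\langle P_\lambda(x;-1),p_n\rangle\neq0$.

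For the generating statement I would appeal to Lemmas~\ref{lemma:gen1} and~\ref{lemma:gen2}. Each $u_n(x;-1)=P_{\lambda^n}(x;-1)$ is homogeneous of degree $n$, and over the field $\mathbb F$ an element is a unit precisely when it is nonzero; hence $\{u_n(x;-1)\}_{n\geq0}$ is algebraically independent and generates $\Lambda_{\mathbb F}$ if and only if $\langle u_n(x;-1),p_n\rangle\neq0$ for every $n\geq0$. For $n=0$ this is automatic ($u_0=1$), and for $n\geq1$ it is exactly the criterion established above. I do not expect a genuine obstacle here: the only point requiring care is the validity of the substitution $t=-1$, namely that the relevant inner product has no pole there and that $-1$ plays the role of the primitive $2$nd root of unity in $\mathbb F$, both of which are immediate in characteristic zero. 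So Theorem~\ref{theorem:Schur-P} will be a direct corollary of Theorem~\ref{theorem:halgen} with no new combinatorial input.
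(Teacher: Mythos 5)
Your proposal is correct and is essentially identical to the paper's own treatment: the paper likewise deduces Theorem~\ref{theorem:Schur-P} directly from Theorem~\ref{theorem:halgen} by taking $k=2$ and $\xi_2=-1$, then invokes Lemmas~\ref{lemma:gen1} and~\ref{lemma:gen2} for the generating statement. Your extra remarks on the legitimacy of evaluating at $t=-1$ are sound and only make the argument more careful.
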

Since $Q_\lambda(x;-1)=\begin{cases}2^{l(\lambda)} P_\lambda(x;-1)\quad \text{ if } \lambda \text{ consists of distinct parts,}\\
0 \quad \quad \text{    Otherwise.}
\end{cases}$
So the Theorem~\ref{theorem:Schur-P} implies the following.
\begin{theorem}
For a partition $\lambda$ of $n$, $\langle Q_\lambda(x;-1),p_n \rangle $  is nonzero if and only if 
$n$ is odd and $2>l(\lambda)-1$. In particular, for any graded partition sequence $\{\lambda^n\}$ with $u_n=Q_{\lambda^n}(x;-1)$, $\{u_n\}_{n\geq0}$ is algebraically dependent.  
\end{theorem}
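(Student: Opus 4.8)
The plan is to deduce this statement directly from Theorem~\ref{theorem:Schur-P} via the explicit relation between $Q_\lambda(x;-1)$ and $P_\lambda(x;-1)$ recorded just above it. Since $Q_\lambda(x;-1)=2^{l(\lambda)}P_\lambda(x;-1)$ when $\lambda$ has distinct parts and $Q_\lambda(x;-1)=0$ otherwise, bilinearity of the Hall inner product gives $\langle Q_\lambda(x;-1),p_n\rangle = 2^{l(\lambda)}\langle P_\lambda(x;-1),p_n\rangle$ in the first case and $0$ in the second. As $2^{l(\lambda)}\neq 0$ in a field of characteristic zero, the nonvanishing of $\langle Q_\lambda(x;-1),p_n\rangle$ is equivalent to: $\lambda$ has distinct parts \emph{and} $\langle P_\lambda(x;-1),p_n\rangle\neq 0$.

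Next I would feed this into the two conditions of Theorem~\ref{theorem:Schur-P}. If $\lambda$ has distinct parts then every multiplicity $m_i(\lambda)$ is $0$ or $1$, so $\lfloor m_i(\lambda)/2\rfloor=0$ for all $i$ and hence $\sum_{i\geq1}\lfloor m_i(\lambda)/2\rfloor=0$. The first bullet of Theorem~\ref{theorem:Schur-P} then forces $\lfloor(l(\lambda)+1)/2\rfloor=0$, i.e. $l(\lambda)=0$, which is vacuous for $n\geq1$; so there is no nonzero inner product when $n$ is even. The second bullet forces $\lfloor(l(\lambda)-1)/2\rfloor=0$, i.e. $l(\lambda)\leq 2$, equivalently $2>l(\lambda)-1$. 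Finally I would observe that the distinctness hypothesis is automatic in this surviving regime: if $n$ is odd and $l(\lambda)\leq 2$ then either $\lambda=(n)$, or $\lambda=(a,b)$ with $a+b=n$ odd, forcing $a\neq b$; in both cases the parts are distinct. Combining, $\langle Q_\lambda(x;-1),p_n\rangle\neq 0$ precisely when $n$ is odd and $2>l(\lambda)-1$ (for $n\geq1$; the $n=0$ case is the trivial $\langle Q_\emptyset,p_0\rangle=1$ and is governed by the convention $u_0=1$).

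For the ``in particular'' assertion I would invoke Lemmas~\ref{lemma:gen1} and~\ref{lemma:gen2}: over the field $\mathbb F$, a graded sequence $\{u_n\}$ with $u_n$ homogeneous of degree $n$ is algebraically independent if and only if it generates $\Lambda_{\mathbb F}$, if and only if $\langle u_n,p_n\rangle\neq 0$ for every $n$. Taking $n=2$, both choices $\lambda^2=(2)$ and $\lambda^2=(1,1)$ give $\langle Q_{\lambda^2}(x;-1),p_2\rangle=0$ by the criterion just established, since $2$ is even. Hence $\langle u_2,p_2\rangle=0$, so $\{u_n\}_{n\geq0}$ fails to generate $\Lambda_{\mathbb F}$ and is therefore algebraically dependent. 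I do not anticipate a genuine obstacle: all the content is already carried by Theorem~\ref{theorem:Schur-P}, and the only points needing a moment's care are the bookkeeping with the floor functions and the remark that distinctness of parts is forced as soon as $n$ is odd and $l(\lambda)\leq 2$.
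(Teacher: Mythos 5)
Your proposal is correct and follows exactly the route the paper takes: it deduces the statement from Theorem~\ref{theorem:Schur-P} via the relation $Q_\lambda(x;-1)=2^{l(\lambda)}P_\lambda(x;-1)$ for distinct parts (and $0$ otherwise), then applies Lemmas~\ref{lemma:gen1} and~\ref{lemma:gen2} at $n=2$ for the dependence claim. Your write-up actually supplies the floor-function bookkeeping and the observation that distinctness of parts is automatic when $n$ is odd and $l(\lambda)\leq 2$, details the paper leaves implicit.
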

\section{Generators for the familiar bases over $\mathbb{F}(q,t)$}
\subsection{The Macdonald symmetric function $P_{\lambda}(x;q,t)$} The Macdonald symmetric function can be defined in many ways, we refer the reader to \cite{MR3443860}.
By \cite[Ex.~VI.8.8]{MR3443860}, we have 
\begin{equation}
\label{eq:mac} \langle P_\lambda(x;q,t),p_n \rangle =\dfrac{(1-t^n)X_{n}^{\lambda}(q,t)}{ c_{\lambda}(q,t)}
\end{equation}
where $$X_n^{\lambda}(q,t)=\prod_{(i,j)}(t^{(i-1)}-q^{(j-1)})$$ the product varies over the cells $(i,j) \in \lambda $ with the exception of $(1,1)$ and $$c_{\lambda}(q,t)=\prod_{s\in \lambda}(1-q^{a(s)}t^{l(s)+1}).$$
Hence we can easily give a sufficient condition for a sequence of Macdonald symmetric functions to form a system of algebraically independent generators for $\Lambda_{\mathbb F(q,t)}$ from the above equation.
\begin{theorem} Let for each $n\geq0$, $u_n=P_\lambda(x;q,t)$ for some  partition $\lambda$ of $n$. Then the set $\{u_n\}_{n\geq0}$ is algebraically independent and generates $\Lambda_{\mathbb{F}(q,t)}$. 
 For $\xi,\eta\in \mathbb{F}(q,t)$, the sequence $\{u_n(x;\xi,\eta)\}$ of specializations forms a system of algebraically independent generators of $\Lambda_{\mathbb{F}(t,q)}$ if $\eta$ not a root of unity and $\xi^i\neq \eta^j$ for any integers $i$ and $j$. 
\end{theorem}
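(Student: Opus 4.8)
The plan is to mimic the proofs of Theorems~\ref{theorem:hallp} and~\ref{theorem:halgen}: by Lemmas~\ref{lemma:gen1} and~\ref{lemma:gen2}, and since over a field a unit is just a nonzero element, it suffices to show that $\langle P_{\lambda^n}(x;q,t),p_n\rangle\neq 0$ in $\mathbb F(q,t)$ for every graded partition sequence $\{\lambda^n\}$ and every $n\geq 0$, and, for the specialized statement, that $\langle P_{\lambda^n}(x;\xi,\eta),p_n\rangle\neq 0$ in the ground field whenever $\xi,\eta$ satisfy the stated hypotheses. The case $n=0$ is trivial since $u_0=1$, so one fixes $n\geq1$ and $\lambda=\lambda^n\vdash n$ and reads off from \eqref{eq:mac} that $\langle P_\lambda(x;q,t),p_n\rangle=(1-t^n)\,X_n^{\lambda}(q,t)/c_\lambda(q,t)$.

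For the first assertion I would simply observe that numerator and denominator are nonzero elements of $\mathbb F[q,t]$. The denominator $c_\lambda(q,t)=\prod_{s\in\lambda}(1-q^{a(s)}t^{l(s)+1})$ is a product of polynomials each having constant term $1$; the factor $1-t^n$ is nonzero for $n\geq1$; and each factor $t^{i-1}-q^{j-1}$ of $X_n^{\lambda}(q,t)$ is a difference of two distinct monomials in the algebraically independent variables $q,t$, hence nonzero, unless $i-1=j-1=0$ — that is, unless the cell is $(1,1)$, which is exactly the cell omitted from the product defining $X_n^{\lambda}$. Hence the inner product is nonzero in $\mathbb F(q,t)$, and Lemmas~\ref{lemma:gen1} and~\ref{lemma:gen2} give algebraic independence and generation, with no condition on the partitions $\lambda^n$.

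For the specialization $q\mapsto\xi$, $t\mapsto\eta$ I would first record that $P_\lambda(x;\xi,\eta)$ is well defined: the integral form satisfies $J_\lambda=c_\lambda P_\lambda$ with $J_\lambda\in\Lambda_{\mathbb Z[q,t]}$ (see \cite[Ch.~VI]{MR3443860}), so the coefficients of $P_\lambda$ have no poles away from the zero set of $c_\lambda$; and since the Hall inner product is extended bilinearly and involves neither $q$ nor $t$, once $c_\lambda(\xi,\eta)\neq0$ the identity \eqref{eq:mac} specializes to $\langle P_\lambda(x;\xi,\eta),p_n\rangle=(1-\eta^n)\,X_n^{\lambda}(\xi,\eta)/c_\lambda(\xi,\eta)$. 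It then remains to check three nonvanishings under the hypotheses that $\eta$ is not a root of unity and that $\xi^i\neq\eta^j$ for every pair of integers $(i,j)\neq(0,0)$ (which is how one should read the hypothesis, and which additionally forces $\xi$ not to be a root of unity, taking $j=0$). First, $1-\eta^n\neq0$ since $\eta$ is not a root of unity. Second, each factor $1-\xi^{a(s)}\eta^{l(s)+1}$ of $c_\lambda(\xi,\eta)$ is nonzero: its vanishing would give $\xi^{a(s)}=\eta^{-(l(s)+1)}$ with $a(s)\geq0$ and $-(l(s)+1)\leq-1$, contradicting either ``$\eta$ not a root of unity'' (when $a(s)=0$) or the hypothesis (when $a(s)\geq1$). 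Third, each factor $\eta^{i-1}-\xi^{j-1}$ of $X_n^{\lambda}(\xi,\eta)$ with $(i,j)\neq(1,1)$ is nonzero by the same bookkeeping, the subcase $i=1$ being exactly the instance $\xi^{j-1}\neq\eta^{0}$ with $j-1\geq1$. Thus $\langle P_\lambda(x;\xi,\eta),p_n\rangle\neq0$, and Lemmas~\ref{lemma:gen1} and~\ref{lemma:gen2} conclude the argument.

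The step most in need of care — rather than a true obstacle — is the well-definedness of the specialized polynomial together with the persistence of \eqref{eq:mac} under specialization; everything else is the elementary bookkeeping above, which translates the nonvanishing of the monomial differences $t^{i-1}-q^{j-1}$ and $1-q^{a(s)}t^{l(s)+1}$ into the multiplicative-independence hypotheses on $\xi$ and $\eta$.
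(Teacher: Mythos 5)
Your proposal is correct and follows exactly the route the paper intends: the paper states the theorem as an immediate consequence of \eqref{eq:mac} without writing out the verification, and your argument supplies precisely that verification via Lemmas~\ref{lemma:gen1} and~\ref{lemma:gen2}. Your added care about the well-definedness of the specialization and the reading of the hypothesis ``$\xi^i\neq\eta^j$'' as excluding $(i,j)=(0,0)$ are sensible clarifications of points the paper leaves implicit.
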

\subsection{$Q$-Whittaker symmetric functions $W_\lambda(x;q)$} 
The $Q$-Whittaker symmetric functions $W_\lambda(x;q)$ can be obtained by substituting $t=0$ in the Macdonald symmetric function $P_\lambda(x;q,t)$, i.e., $W_\lambda(x;q)= P_\lambda(x;q,0).$
\begin{theorem}
Let $\{\lambda^n\}$ be a graded partition sequence and for each $n\geq0$, $u_n=W_{\lambda^n}(x;q)$. Then $\{u_n\}_{n\geq0}$ is algebraically independent and generates $\Lambda_{Q(q)}$. For $\xi\in \mathbb F$, the sequence $\{u_n(x;\xi)\}_{n\geq0}$ of specializations forms a system of algebraically independent generators of $\Lambda_{\mathbb{F}(t)}$ if $\xi\neq 0$ and $\xi$ is not a root of unity.
\end{theorem}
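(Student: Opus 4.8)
The plan is to proceed exactly as in the Macdonald case just treated: by Lemmas~\ref{lemma:gen1} and~\ref{lemma:gen2} it suffices to compute $\langle W_{\lambda^n}(x;q),p_n\rangle$ for $\lambda^n\vdash n$, and to decide when it is a unit --- in $\mathbb Q(q)$ for the first assertion, and in $\mathbb F$ after the substitution $q=\xi$ for the second. Since $W_\lambda(x;q)=P_\lambda(x;q,0)$, and since the right-hand side of~\eqref{eq:mac} is regular at $t=0$ because $c_\lambda(q,0)=1\neq 0$, the required value is obtained simply by setting $t=0$ in~\eqref{eq:mac}.

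First I would carry out this specialization. The prefactor $1-t^n$ becomes $1$, and every factor of $c_\lambda(q,t)=\prod_{s\in\lambda}(1-q^{a(s)}t^{l(s)+1})$ carries an exponent $l(s)+1\geq 1$ on $t$, so $c_\lambda(q,0)=1$. In the numerator $X_n^\lambda(q,t)=\prod_{(i,j)}(t^{i-1}-q^{j-1})$, where the product runs over the cells of $\lambda$ other than $(1,1)$, a cell $(1,j)$ in the first row contributes $1-q^{j-1}$ (with $j\geq 2$), while a cell $(i,j)$ with $i\geq 2$ contributes $-q^{j-1}$. Collecting these factors, $\langle W_\lambda(x;q),p_n\rangle=X_n^\lambda(q,0)$ equals $\phi_{\lambda_1-1}(q)$ times, up to sign, a power of $q$; in particular it is a nonzero element of $\mathbb Q(q)$, hence a unit, and Lemmas~\ref{lemma:gen1} and~\ref{lemma:gen2} yield the first assertion for an arbitrary graded partition sequence.

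For the specialization at $\xi\in\mathbb F$, the same formula shows that $\langle W_\lambda(x;\xi),p_n\rangle$ is, up to sign, a power of $\xi$ times $\phi_{\lambda_1-1}(\xi)=\prod_{k=1}^{\lambda_1-1}(1-\xi^k)$. The power of $\xi$ vanishes only if $\xi=0$, and $\phi_{\lambda_1-1}(\xi)$ vanishes only if $\xi^k=1$ for some $1\leq k\leq\lambda_1-1$; hence if $\xi\neq 0$ and $\xi$ is not a root of unity, $\langle W_{\lambda^n}(x;\xi),p_n\rangle\neq 0$ for every $n$ and every choice of $\lambda^n$, and Lemmas~\ref{lemma:gen1} and~\ref{lemma:gen2} again complete the argument.

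I do not expect a serious obstacle here; the only delicate point is the bookkeeping of the $t\to 0$ limit in~\eqref{eq:mac}, in particular the convention $0^{0}=1$, which is what makes the first-row cells of $\lambda$ survive while the cells below the first row do not. The conceptual reason that --- unlike the Hall-Littlewood specializations of Theorem~\ref{theorem:halgen} --- no condition on the partitions $\lambda^n$ is needed is that at $t=0$ the denominator $c_\lambda(q,t)$ trivializes and the numerator $X_n^\lambda(q,t)$ never becomes identically zero. As a consistency check, at $\xi=0$ one is left with the power of $\xi$ alone, which is nonzero exactly when $\lambda$ is a hook, recovering $W_\lambda(x;0)=s_\lambda(x)$ and Theorem~\ref{theorem:theSchur}.
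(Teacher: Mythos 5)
Your proposal is correct and follows essentially the same route as the paper: specialize the Macdonald inner-product formula \eqref{eq:mac} at $t=0$ to get $\langle W_\lambda(x;q),p_n\rangle=\pm\, q^{n(\lambda')-\lambda_1(\lambda_1-1)/2}\prod_{i=1}^{\lambda_1-1}(1-q^i)$, observe it is a nonzero element of $\mathbb Q(q)$ whose only roots are $0$ and roots of unity, and invoke Lemmas~\ref{lemma:gen1} and~\ref{lemma:gen2}. Your cell-by-cell bookkeeping of which factors of $X_n^\lambda(q,t)$ survive at $t=0$ is a welcome elaboration of a step the paper states without detail.
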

\begin{proof}
Let $\lambda$ be a partition of $n$. Since we know that the inner product $ \langle P_\lambda(x;q,t),p_n \rangle$ ~\eqref{eq:mac}. Substituting $t=0$, we obtain 
\begin{equation}
   \langle W_\lambda(x;q),p_n \rangle = (-1)^{|\lambda|-{\lambda}_{1}} q^{n(\lambda') - \lambda_1 (\lambda_1 -1)/2 } \prod_{i=1}^{\lambda_1 -1}(1-q^i) 
\end{equation}
where $n(\lambda')=\sum_{i\geq1}(i-1)\lambda_i' $.

Hence, from the above equation, the inner product is nonzero and the only roots are zero and roots of unity. Hence the theorem follows by lemmas \ref{lemma:gen1},\ref{lemma:gen2}. 
\end{proof}
If we specialize $q=0$ in $W_\lambda(x;q)$, then we get the Schur function $s_\lambda$ in which case we know what the generating sequences are~\ref{theorem:theSchur}. 
Let us look more closely at the case where $q$ is a root of unity. From the above inner product, we can make the following theorem.
 
\begin{theorem}
If $\xi_k$ is a primitive root of unity, then  for a partition $\lambda$ of $n$, $\langle W_\lambda(x;\xi_k),p_n \rangle $  is nonzero if and only if $\lambda_1^n\leq k$. 

Let $\{\lambda^n\}$ be a graded partition sequence and for each $n\geq0$, $u_n=W_{\lambda^n}(x;\xi_k).$ Then $\{u_n\}_{n\geq0}$ forms an algebraically independent set if and only if, for each $n\geq1$, $\lambda_1^n\leq k$.
\begin{proof}
The result follows from the above inner product and the Lemmas~\ref{lemma:gen1},~\ref{lemma:gen2}.
\end{proof}
\end{theorem}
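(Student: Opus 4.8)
The plan is to read everything off from the closed form
\[
\langle W_\lambda(x;q),p_n\rangle = (-1)^{|\lambda|-\lambda_1}\, q^{\,n(\lambda')-\lambda_1(\lambda_1-1)/2}\prod_{i=1}^{\lambda_1-1}(1-q^i)
\]
obtained just above by specializing the Macdonald evaluation \eqref{eq:mac} at $t=0$, and then to substitute $q=\xi_k$. First I would note that this expression is genuinely a polynomial in $q$: since $n(\lambda')=\sum_{i\ge1}\binom{\lambda_i}{2}\ge\binom{\lambda_1}{2}=\lambda_1(\lambda_1-1)/2$, the exponent of $q$ is a nonnegative integer, so the substitution $q=\xi_k$ is unproblematic.

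Next I would localize the vanishing. The prefactor $(-1)^{|\lambda|-\lambda_1}$ is a unit, and $\xi_k^{\,n(\lambda')-\lambda_1(\lambda_1-1)/2}$ is a root of unity, hence also nonzero; therefore $\langle W_\lambda(x;\xi_k),p_n\rangle=0$ if and only if $\prod_{i=1}^{\lambda_1-1}(1-\xi_k^i)=0$. A factor $1-\xi_k^i$ vanishes exactly when $k\mid i$, and among $1\le i\le\lambda_1-1$ the smallest multiple of $k$ that can occur is $i=k$; thus the product is zero if and only if $\lambda_1-1\ge k$, i.e.\ $\lambda_1>k$. Equivalently, $\langle W_\lambda(x;\xi_k),p_n\rangle\ne 0$ if and only if $\lambda_1\le k$, which is the first assertion. (The case $n=0$ is consistent: $u_0=1$, $\langle u_0,p_0\rangle=1\ne0$, and the empty partition has $\lambda_1=0\le k$.)

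Finally I would feed this into Lemmas~\ref{lemma:gen1} and~\ref{lemma:gen2}. Working over the field $\mathbb F$ (enlarged, if necessary, so as to contain $\xi_k$), Lemma~\ref{lemma:gen2} gives that $\{u_n\}_{n\ge0}$ generates $\Lambda_{\mathbb F}$ if and only if $\langle u_n,p_n\rangle$ is a unit — that is, nonzero — for every $n\ge0$, while Lemma~\ref{lemma:gen1} gives that over a field being a generating set and being algebraically independent are equivalent. Combining these with the first assertion, $\{u_n\}_{n\ge0}$ is algebraically independent if and only if $\lambda_1^n\le k$ for all $n\ge1$, which is the second assertion.

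I do not expect a serious obstacle here: the substantive work is already done in deriving the closed form for $\langle W_\lambda(x;q),p_n\rangle$ from \eqref{eq:mac}, and what remains is the bookkeeping above. The only point that needs a word of care is verifying that the $q$-exponent is nonnegative (so that the formula is a polynomial and the specialization is valid), which is immediate from $n(\lambda')=\sum_{i\ge1}\binom{\lambda_i}{2}$.
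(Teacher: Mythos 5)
Your proposal is correct and follows essentially the same route as the paper: the paper's proof is a one-line appeal to the displayed formula for $\langle W_\lambda(x;q),p_n\rangle$ together with Lemmas~\ref{lemma:gen1} and~\ref{lemma:gen2}, and you simply carry out the bookkeeping it leaves implicit (the product $\prod_{i=1}^{\lambda_1-1}(1-\xi_k^i)$ vanishes exactly when $\lambda_1>k$, and the $q$-exponent is nonnegative since $n(\lambda')=\sum_i\binom{\lambda_i}{2}$).
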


\subsection{ Integral form of the Macdonald symmetric function $J_{\lambda}(x;q,t)$} The integral form of the Macdonald symmetric function corresponding to the partition $\lambda$ is defined by $J_{\lambda}(x;q,t)=c_{\lambda}(q,t) P_{\lambda}(x;q,t)$ where $P_{\lambda}(x;q,t)$ is the Macdonald symmetric function corresponding to the partition $\lambda$. From the definition of $J_\lambda(x;q,t)$ and Eq\eqref{eq:mac} we have,
\begin{align*}
    \langle J_\lambda(x;q,t),p_n \rangle =(1-t^n)X_{n}^{\lambda}(q,t),
\end{align*} 
where $X_n^{\lambda}(q,t)=\prod_{(i,j)}(t^{(i-1)}-q^{(j-1)})$ the product varies over the cells $(i,j) \in \lambda $ with the exception of $(1,1)$.  
Hence we have
\begin{theorem} For each $n\geq0$, let $u_n=J_\lambda(x;q,t)$ for some  partition $\lambda$ of $n$. Then the set $\{u_n\}_{n\geq0}$ is algebraically independent and generates $\Lambda_{\mathbb{F}(q,t)}$. For $\xi,\eta\in \mathbb{F}(q,t)$, the sequence $\{u_n(x;\xi,\eta)\}_{n\geq0}$ of specializations forms a system of algebraically independent generators of $\Lambda_{\mathbb{F}(t,q)}$ if $\eta$ is not a root of unity and $\xi^i\neq \eta^j$ for any nonnegative integers $i$ and $j$. 
\end{theorem}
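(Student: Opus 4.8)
The plan is to invoke Lemmas~\ref{lemma:gen1} and~\ref{lemma:gen2} once more, exactly as in the preceding Hall--Littlewood and Macdonald results. Since $\mathbb F(q,t)$ is a field, Lemma~\ref{lemma:gen1} makes algebraic independence and generation equivalent, and Lemma~\ref{lemma:gen2} reduces everything to checking that $\langle u_n,p_n\rangle$ is a unit in $\mathbb F(q,t)$ for every $n\ge 1$, i.e.\ simply nonzero. Plugging into the displayed identity $\langle J_\lambda(x;q,t),p_n\rangle=(1-t^n)\,X_n^\lambda(q,t)$ with $X_n^\lambda(q,t)=\prod_{(i,j)\in\lambda,\ (i,j)\ne(1,1)}(t^{i-1}-q^{j-1})$, the factor $1-t^n$ is a nonzero element of $\mathbb F[t]$, and a factor $t^{i-1}-q^{j-1}$ vanishes in $\mathbb F(q,t)$ only if $t^{i-1}=q^{j-1}$, which forces $i-1=j-1=0$ because $q$ and $t$ are algebraically independent over $\mathbb F$. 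As the cell $(1,1)$ is excluded from the product, every factor is nonzero, so $X_n^\lambda(q,t)\ne 0$ and hence $\langle u_n,p_n\rangle\ne 0$, which proves the first assertion.

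For the specializations I would set $q=\xi$ and $t=\eta$ in the same formula, giving $\langle u_n(x;\xi,\eta),p_n\rangle=(1-\eta^n)\prod_{(i,j)\in\lambda,\ (i,j)\ne(1,1)}(\eta^{i-1}-\xi^{j-1})$, and again apply Lemmas~\ref{lemma:gen1} and~\ref{lemma:gen2}; the remaining task is to show this product is nonzero under the stated hypotheses. I would split the cells $(i,j)\in\lambda$ with $(i,j)\ne(1,1)$ into three types according to whether $i=1<j$, or $i>1=j$, or $i>1$ and $j>1$: the first type contributes a factor $1-\xi^{j-1}$ with $j-1\ge 1$, the second a factor $\eta^{i-1}-1$ with $i-1\ge 1$, and the third a factor $\eta^{i-1}-\xi^{j-1}$ with both exponents $\ge 1$. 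The factor $1-\eta^n$ is nonzero since $\eta$ is not a root of unity, and each of the three families of factors is nonzero by the hypothesis $\xi^i\ne\eta^j$ (understood as holding whenever $(i,j)\ne(0,0)$), which in particular supplies $\xi^k\ne 1=\eta^0$ and $\eta^k\ne 1=\xi^0$ for $k\ge 1$, as well as $\eta^{i-1}\ne\xi^{j-1}$ for $i-1,j-1\ge 1$.

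There is no serious obstacle here: the argument is a direct specialization of the framework already in place, and the only point requiring care is the bookkeeping that guarantees every factor of $(1-\eta^n)\,X_n^\lambda(\xi,\eta)$ is nonzero, i.e.\ matching each cell of $\lambda$ other than $(1,1)$ to the appropriate instance of the hypotheses on $\xi$ and $\eta$. Once this is verified, Lemma~\ref{lemma:gen2} shows $\{u_n(x;\xi,\eta)\}_{n\ge 0}$ generates and Lemma~\ref{lemma:gen1} shows it is algebraically independent, completing the proof.
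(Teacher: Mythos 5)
Your proof is correct and follows exactly the route the paper intends: the paper simply displays $\langle J_\lambda(x;q,t),p_n\rangle=(1-t^n)X_n^\lambda(q,t)$ and asserts the theorem, leaving to the reader the verification (via Lemmas~\ref{lemma:gen1} and~\ref{lemma:gen2}) that each factor is nonzero generically and under the stated specialization hypotheses. Your cell-by-cell bookkeeping, including the sensible reading of ``$\xi^i\neq\eta^j$'' as excluding $(i,j)=(0,0)$, supplies precisely the details the paper omits.
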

\subsection*{Acknowledgements}
I thank my advisor, Amritanshu Prasad, for his invaluable support and helpful discussions. I thank Aritra Bhattacharya, V. Sathish Kumar, and Sankaran Viswanath for fruitful discussions and encouragement. I also thank Arvind Ayyer, K.N. Raghavan, and Arun Ram for their encouragement.

\bibliographystyle{abbrv}
\bibliography{refs}
\end{document}